\numberwithin{equation}{section}
\newtheorem{theorem}{Theorem}[section]
\newtheorem{proposition}{Proposition}[section]
\newtheorem{corollary}{Corollary}[section]
\newtheorem{remark}{Remark}[section]
\newtheorem{assumption}{Assumption}[section]
\def\ba{\boldsymbol{a}}
\def\bg{\boldsymbol{g}}
\def\bl{\boldsymbol{l}}
\def\bu{\boldsymbol{u}}
\def\bw{\boldsymbol{w}}
\def\bx{\boldsymbol{x}}
\def\by{\boldsymbol{y}}
\def\bL{\boldsymbol{L}}
\def\bY{\boldsymbol{Y}}
\def\balpha{\boldsymbol{\alpha}}
\def\bbeta{\boldsymbol{\beta}}
\def\bphi{\boldsymbol{\phi}}
\def\bpi{\boldsymbol{\pi}}
\def\bzero{\mathbf{0}}
\def\bone{\mathbf{1}}
\def\calA{{\cal A}}
\def\calS{{\cal S}}
\def\calV{{\cal V}}
\def\calL{{\cal L}}
\def\scrI{\mathscr{I}}
\title{Stability condition of a two-dimensional QBD process and its application to estimation of efficiency for two-queue models} 
\author{Toshihisa Ozawa \\ 
Faculty of Business Administration, Komazawa University \\
1-23-1 Komazawa, Setagaya-ku, Tokyo 154-8525, Japan \\
E-mail: toshi@komazawa-u.ac.jp}
\begin{document}

\maketitle

\begin{abstract}
In order to analyze stability of a two-queue model, we consider a two-dimensional quasi-birth-and-death process (2d-QBD process), denoted by $\{\bY(t)\}=\{((L_1(t),L_2(t)),J(t))\}$. The two-dimensional process $\{(L_1(t),L_2(t))\}$ on $\mathbb{Z}_+^2$ is called a level process, where the individual processes $\{L_1(t)\}$ and $\{L_2(t)\}$ are assumed to be skip free. The supplemental process $\{J(t)\}$ is called a phase process and it takes values in a finite set.
The 2d-QBD process is a CTMC, in which the transition rates of the level process vary according to the state of the phase process like an ordinary (one-dimensional) QBD process. 
%
In this paper, we first state the conditions ensuring a 2d-QBD process is positive recurrent or transient and then demonstrate that the efficiency of a two-queue model can be estimated by using the conditions we obtained. 

\smallskip
{\it Key wards}: continuous-time Markov chain, quasi-birth-and-death process, stability, positive recurrence, Foster's criterion, matrix analytic method

\smallskip
{\it Mathematical Subject Classification}: 60J10, 60K25
\end{abstract}

%
%
\section{Introduction} \label{sec:intro}

Consider a two-queue model that consists of two customer classes corresponding to two queues and several servers serving customers according to some kind of service policy. In contrast to single-queue models, the stability condition of such a two-queue model is often non-trivial.  
For example, consider an $M/M/1$ queue with setup times (model 1) and an $M_1,M_2/M_1,M_2/1$ nonpreemptive-priority queue with setup times (model 2). Let $\lambda_1$ and $\mu_1$ be the arrival and service rates of model 1, respectively, and $\lambda_{2,1}$, $\lambda_{2,2}$, $\mu_{2,1}$ and $\mu_{2,2}$ those of model 2.  The traffic intensities of the models are given by $\rho_1=\lambda_1/\mu_1$ and $\rho_2=\lambda_{2,1}/\mu_{2,1}+\lambda_{2,2}/\mu_{2,2}$, respectively. 
It is well known that model 1 is stable if $\rho_1<1$, where we say a queueing model is stable if the continuous-time Markov chain (CTMC) representing the behavior of the queueing model is positive recurrent. On the other hand, model 2 may not be stable even if $\rho_2<1$, and this means that the stability condition of model 2 cannot be given only by using the traffic intensity. A reason why this phenomenon occurs in model 2 is that the queue of high-priority customers sometimes becomes empty even if the system is overloaded and at that time the server needs setup time to start service for a low-priority customer. 
With respect to model 2, letting the value of $\lambda_{2,1}$ or $\lambda_{2,2}$ vary with fixing the other parameters, we can see that there exists $\rho^*<1$ such that model 2 is stable if $\rho_2<\rho^*$ and it is unstable (i.e., the corresponding CTMC is transient) if $\rho_2> \rho^*$. %
The value of $\rho^*$ depends on the model parameters (see Section \ref{sec:example}). 
This $\rho^*$ is a measure to estimate efficiency of model 2 since it corresponds to the maximum ratio of server ability devoted to customer service under a certain condition. We, therefore, call $\rho^*$ the efficiency of the model. $\rho^*$ is also a measure to estimate efficiency of the service policy. Note that the value of $(\lambda_{2,1},\lambda_{2,2})$ when $\rho_2$ equals $\rho^*$ corresponds to the maximum throughput vector of customers. 

Another typical example is an N-model \cite{Tezcan13}, which consists of two customer classes and two server pools. Let $m_1$ be the number of servers in the first server pool and $m_2$ that in the second server pool. While the servers in the first server pool can serve only class-1 customers, those in the second server pool can serve customers of both classes, where class-1 customers have priority over class-2 customers. 
Assume Poisson arrivals and exponential services. Let $\lambda_1$ and $\lambda_2$ be the arrival rates of class-1 customers and class-2 customers, respectively, and let $\mu_1$ and $\mu_2$ be the service rates of class-1 customers and class-2 customers. The traffic intensity of the N-model per server is given by $\rho=(\lambda_1/\mu_1+\lambda_2/\mu_2)/(m_1+m_2)$. 
Like the first example, the condition ``$\rho<1$" does not ensure the N-model is stable. When the value of $\lambda_1$ or $\lambda_2$  varies with fixing the other parameters, there exists $\rho^*<1$ such that the N-model is stable if $\rho<\rho^*$ and it is unstable if $\rho>\rho^*$. This $\rho^*$ is a measure to estimate (total) efficiency of the N-model. 
Stability of N-models has been analyzed in \cite{Tezcan13}. 

In order to evaluate the efficiency of a two-queue model, it suffices to know the stability condition of the two-queue model. We, therefore, consider a two-dimensional quasi-birth-and-death process (2d-QBD process, for a discrete-time version of 2d-QBD process, see \cite{Ozawa13}) as a stochastic model representing the behavior of the two-queue model and obtain the stability condition of the 2d-QBD process. 
Denote a 2d-QBD process by $\{\bY(t)\}=\{((L_1(t),L_2(t)),J(t))\}$. The two-dimensional process $\{(L_1(t),L_2(t))\}$ on $\mathbb{Z}_+^2$ is called a level process, where the individual processes $\{L_1(t)\}$ and $\{L_2(t)\}$ are assumed to be skip free. The supplemental process $\{J(t)\}$ is called a phase process and it takes values in a finite set.
The 2d-QBD process is a CTMC, in which the transition rates of the level process vary according to the state of the phase process like an ordinary QBD process. This modulation is space homogeneous except for the boundaries of $\mathbb{Z}_+^2$. 
In the same way as ordinary QBD processes \cite{Latouche99,Neuts94}, stochastic models arising from various two-queue models and two-node queueing networks with Markovian arrival processes (MAPs) and phase-type services can be represented as 2d-QBD processes. 
Furthermore, two-queue models with various service policies such as nonpreemptive priority, $K$-limited service, server vacation and server  setup  can also be represented as 2d-QBD processes (for the case of ordinary QBD process, see \cite{Ozawa04}). 
Our first aim is to explicitly state the conditions ensuring a 2d-QBD process is positive recurrent or transient as a main theorem, and the second one is to demonstrate that the efficiency of a two-queue model can be evaluated by using the conditions we obtained. 
Here, it should be emphasized that we do not intend to analyze specific queueing models. Instead, we present a general-purpose way to analyze stability of Markovian two-queue models as well as Markovian two-node queueing networks. 

In the proof of the main theorem, we use a discrete-time 2d-QBD process obtained from the original 2d-QBD process by uniformization. The discrete-time 2d-QBD process has the same stationary distribution as the original 2d-QBD process and the stability condition of the latter is coincident with that of the former. 
A discrete-time 2d-QBD process \textit{without a phase process} is a two-dimensional skip-free reflecting random walk (2d-RRW) and stability of 2d-RRWs has been studied in a lot of literature (see \cite{Fayolle95} and references therein). Especially, remarkable results have been obtained in \cite{Fayolle89,Malyshev81}. In this paper, following their results, we analyze stability of the discrete-time 2d-QBD process obtained from the original QBD process. 
Key notions we use are ``induced Markov chain" and ``mean increment vector" \cite{Fayolle95,Malyshev81}. An induced Markov chain is a subprocess generated from a 2d-RRW and the mean increment vector with respect to the induced Markov chain is the vector of the expected increments of the 2d-RRW evaluated by using the stationary distribution of the induced Markov chain. 
%
%
The notion of induced Markov chain can be applied to discrete-time 2d-QBD processes as well as continuous-time 2d-QBD processes. 
Since a (continuous-time) 2d-QBD process is a CTMC, we define the mean transition rate vectors for the 2d-QBD process, instead of the mean increment vectors.
%
%
The obtained conditions ensuring a discrete-time 2d-QBD processes (resp.\ continuous-time 2d-QBD process) is positive recurrent or transient are represented in terms of the mean increment vectors (resp.\ mean transition rate vectors). We prove our results by using a kind of Foster's criterion. 
%

Here, we briefly comment on fluid limits and fluid models. As is well-known, one of the most useful methods for analyzing stability of queueing networks including two-queue models is the combination of fluid limits and fluid models (see \cite{Bramson08} and references therein). Applying that method to a queueing model with a service policy, we must give fluid equations to represent the service policy and prove the fluid model corresponding to the original queueing model is stable or not. It is not always an easy task.  
On the other hand, in our method, once a queueing model is represented as a 2d-QBD process, we can see the queueing model is stable or not, by using the conditions we obtained. 

The rest of the paper is organized as follows.  
In Section \ref{sec:model}, the 2d-QBD process is described in detail and the conditions ensuring it is positive recurrent or transient are stated as a main theorem. 
The main theorem is proved in Section \ref{sec:proof}. 
In Section \ref{sec:example}, via two simple examples, we demonstrate that the efficiency of a two-queue model can be evaluated by using our results.
The paper concludes with some remarks in Section \ref{sec:concluding}. 

\bigskip
\textit{Notations.} 
$\mathbb{Z}$ is the set of all integers, $\mathbb{Z}_+$ that of all nonnegative integers and $\mathbb{N}$ that of all positive integers. Define $\mathbb{H}$, $\mathbb{H}_+$ and $\mathbb{H}_-$ as $\mathbb{H}=\{-1,0,1\}$, $\mathbb{H}_+=\{0,1\}$ and $\mathbb{H}_-=\{-1,0\}$, respectively. 
$O$ is a matrix of $0$'s, $\bone$ is a column vector of $1$'s and $\bzero$ is a column vector of $0$'s. Their dimensions are determined in context, but if the dimensions should be specified, we denote them by subscripts. For example, $\bone_k$ is a $k\times 1$ vector of $1$'s. 
$I$ is the identity matrix.

%
%
\section{Model description and stability condition} \label{sec:model}

\subsection{2d-QBD process and related CTMCs} \label{sec:2dQBD}

A 2d-QBP process $\{\bY(t)\}=\{((L_1(t),L_2(t)),J(t))\}$ is a CTMC on a state space $\calS$ given as 
\[
\calS = (\{0\}\times\{0\}\times S_0) \cup (\mathbb{N}\times\{0\}\times S_1) \cup (\{0\}\times\mathbb{N}\times S_2) \cup (\mathbb{N}\times\mathbb{N}\times S_+), 
\]
where for $i\in\{0,1,2,+\}$, $S_i=\{1,2,\ldots,s_i\}$ and $s_i$ is the cardinality of $S_i$.
The infinitesimal generator of $\{\bY(t)\}$, $Q$, is represented in block form as 
\[
Q=\left( Q_{(l_1,l_2),(l_1',l_2')}; (l_1,l_2),(l_1',l_2')\in\mathbb{Z}_+^2 \right),
\]
where each block $Q_{(l_1,l_2),(l_1',l_2')}$ is given as, for some $i$ in $\{0,1,2,+\}$, 
\[
Q_{(l_1,l_2),(l_1',l_2')}=\left( q_{((l_1,l_2),j),((l_1',l_2'),j')}; j,j'\in S_i \right)
\]
and for $((l_1,l_2),j)\ne ((l_1',l_2'),j')$, $q_{((l_1,l_2),j),((l_1',l_2'),j')}$ is the transition rate from $((l_1,l_2),j)$ to $((l_1',l_2'),j')$. 
Since $\{L_1(t)\}$ and $\{L_2(t)\}$ are skip free, the block matrices can be given in terms of 36 matrices 
$
A^{(0)}_{k_1,k_2},\ A^{(1)}_{k_1,k_2},\ A^{(2)}_{k_1,k_2},\ A^{(+)}_{k_1,k_2},\ k_1,k_2\in\mathbb{H}, 
$
as follows: for $(l_1,l_2),(l_1',l_2')\in\mathbb{Z}_+^2$, 
\begin{align}
&Q_{(l_1,l_2),(l_1',l_2')} = \left\{ \begin{array}{ll} 
A^{(0)}_{\varDelta l_1,\varDelta l_2}, 
 & \mbox{if $(l_1,l_2)=(0,0)$, $\varDelta l_1,\varDelta l_2\in\mathbb{H}_+$}, \cr
 & \mbox{if  $(l_1',l_2')=(0,0)$, $\varDelta l_1,\varDelta l_2\in\mathbb{H}_-$}, \cr
 & \mbox{if  $(l_1,l_2)=(1,0)$, $\varDelta l_1=-1$, $\varDelta l_2=1$}, \cr
 & \mbox{or if  $(l_1,l_2)=(0,1)$, $\varDelta l_1=1$, $\varDelta l_2=-1$}, \cr
A^{(1)}_{\varDelta l_1,\varDelta l_2}, 
& \mbox{if $l_1\ge 1$, $l_2=0$, $\varDelta l_1\in\mathbb{H}_+$, $\varDelta l_2\in\mathbb{H}_+$}, \cr
& \mbox{if $l_1\ge 2$, $l_2=0$, $\varDelta l_1=-1$, $\varDelta l_2\in\mathbb{H}_+$}, \cr
& \mbox{if $l_1\ge 1$, $l_2=1$, $\varDelta l_1\in\mathbb{H}_+$, $\varDelta l_2=-1$}, \cr
& \mbox{or if  $l_1\ge 2$, $l_2=1$, $\varDelta l_1=\varDelta l_2=-1$}, \cr
A^{(2)}_{\varDelta l_1,\varDelta l_2}, 
& \mbox{if $l_1=0$, $l_2\ge 1$, $\varDelta l_1\in\mathbb{H}_+$, $\varDelta l_2\in\mathbb{H}_+$}, \cr
& \mbox{if  $l_1=0$, $l_2\ge 2$, $\varDelta l_1\in\mathbb{H}_+$, $\varDelta l_2=-1$}, \cr
& \mbox{if  $l_1=1$, $l_2\ge 1$, $\varDelta l_1=-1$, $\varDelta l_2\in\mathbb{H}_+$}, \cr
& \mbox{or if  $l_1=1$, $l_2\ge 2$, $\varDelta l_1=\varDelta l_2=-1$}, \cr
A^{(+)}_{\varDelta l_1,\varDelta l_2}, 
& \mbox{if  $l_1\ge 1$, $l_2\ge 1$, $\varDelta l_1,\varDelta l_2\in\mathbb{H}_+$}, \cr 
& \mbox{if   $l_1\ge 2$, $l_2\ge 1$, $\varDelta l_1=-1$, $\varDelta l_2\in\mathbb{H}_+$}, \cr 
& \mbox{if   $l_1\ge 1$, $l_2\ge 2$, $\varDelta l_1\in\mathbb{H}_+$, $\varDelta l_2=-1$}, \cr 
& \mbox{or if   $l_1\ge 2$, $l_2\ge 2$, $\varDelta l_1=\varDelta l_2=-1$}, \cr 
O, & \mbox{otherwise}, \end{array} \right. 
\label{eq:Q_blocks}
\end{align}
where $\varDelta l_1=l_1'-l_1$ and $\varDelta l_2=l_2'-l_2$ (see Fig.\ \ref{fig:transition_proba}). The dimensions of the block matrices are determined in context; for example, the dimension of each $A^{(+)}_{k_1,k_2}$ is $s_+\times s_+$ and that of $A^{(1)}_{0,-1}$ is $s_+\times s_1$. 
We assume the following condition throughout the paper. 
\begin{assumption} \label{as:Yt_irreducible}
The CTMC $\{\bY(t)\}$ is irreducible.
\end{assumption}

Since $Q$ is an infinitesimal generator, for $i\in\{0,1,2,+\}$, the off-diagonal elements of $A^{(i)}_{0,0}$ are nonnegative and the diagonal elements are negative under Assumption \ref{as:Yt_irreducible}; for $(k_1,k_2)\ne (0,0)$, $A^{(i)}_{k_1,k_2}$ is nonnegative. 
For $k_1,k_2\in\mathbb{H}$, define matrices $A^{(+)}_{*,k_2}$, $A^{(+)}_{k_1,*}$ and $A^{(+)}_{*,*}$ as $A^{(+)}_{*,k_2} = \sum_{k_1'\in\mathbb{H}} A^{(+)}_{k_1',k_2}$, $A^{(+)}_{k_1,*} = \sum_{k_2'\in\mathbb{H}} A^{(+)}_{k_1,k_2'}$ and $A^{(+)}_{*,*} = \sum_{k_1',k_2'\in\mathbb{H}} A^{(+)}_{k_1',k_2'}$. 
Furthermore, for $k_1,k_2\in\mathbb{H}$, define $A^{(1)}_{*,k_2}$ and $A^{(2)}_{k_1,*}$ as $A^{(1)}_{*,k_2} = \sum_{k_1'\in\mathbb{H}} A^{(1)}_{k_1',k_2}$ and $A^{(2)}_{k_1,*} = \sum_{k_2'\in\mathbb{H}} A^{(2)}_{k_1,k_2'}$.
%
%
%

\begin{figure}[htbp]
\begin{center}
\includegraphics[width=10cm,trim=150 280 150 40]{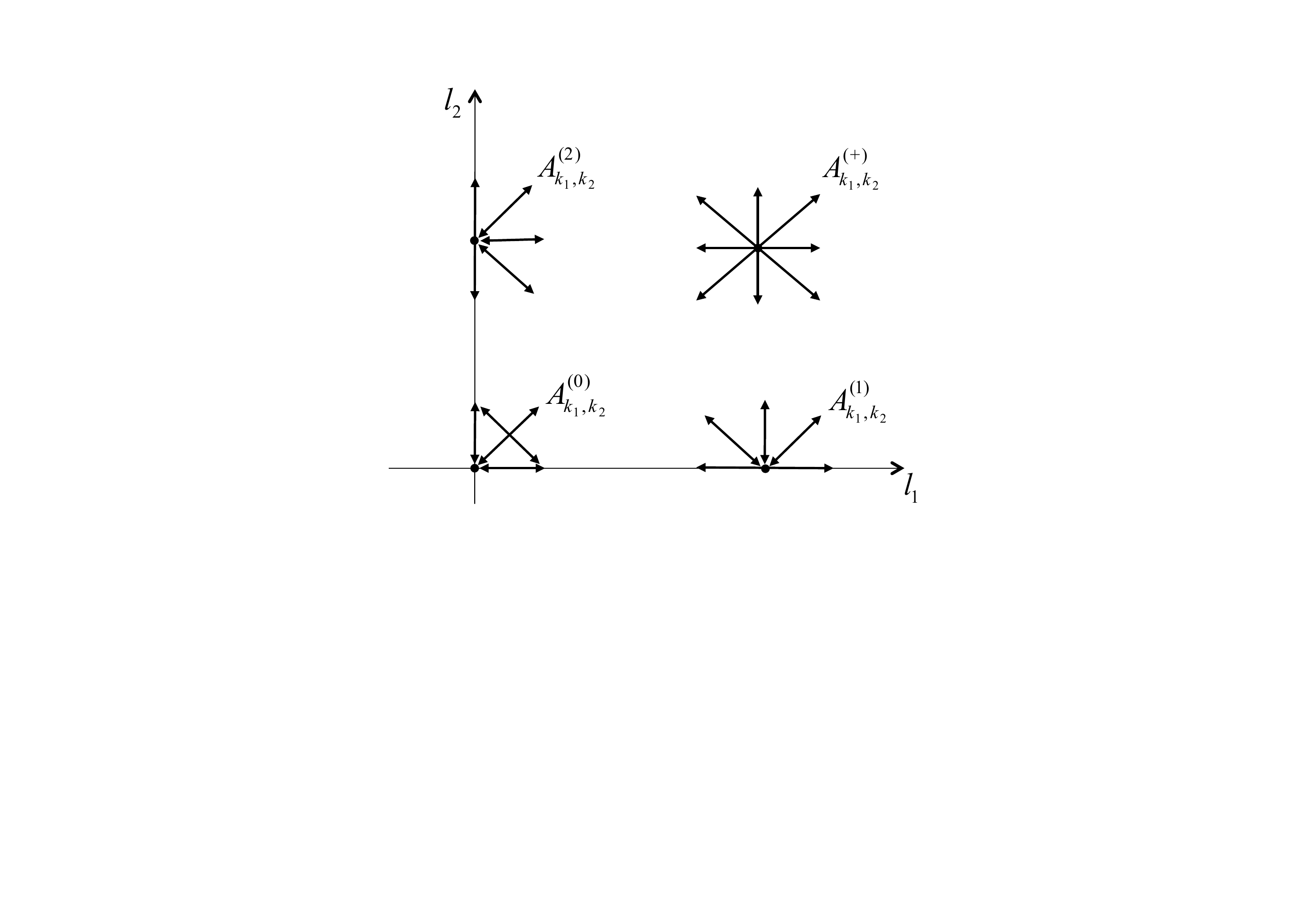} 
\caption{Transition rates of the 2d-QBD process}
\label{fig:transition_proba}
\end{center}
\end{figure}

%
Next, we define three kinds of CTMC generated from $\{\bY(t)\}$ by removing one or two boundaries. Denote them by $\{\bY^{(+)}(t)\}=\{((L^{(+)}_1(t),L^{(+)}_2(t)),J^{(+)}(t))\}$, $\{\bY^{(1)}(t)\}=\{((L^{(1)}_1(t),L^{(1)}_2(t)),J^{(1)}(t))\}$ and $\{\bY^{(2)}(t)\}=\{((L^{(2)}_1(t),L^{(2)}_2(t)),J^{(2)}(t))\}$, respectively.  
$\{\bY^{(+)}(t)\}$ is a CTMC on the state space $\calS^{(+)}=\mathbb{Z}^2\times S_+$ and it is generated from $\{\bY(t)\}$ by removing the boundaries on the $l_1$ and $l_2$-axes.  
The infinitesimal generator of $\{\bY^{(+)}(t)\}$, $Q^{(+)}$, is represented in block form as
\[
Q^{(+)}=\left( Q^{(+)}_{(l_1,l_2),(l_1',l_2')}; (l_1,l_2),(l_1',l_2')\in\mathbb{Z}^2 \right),
\]
and each block $Q^{(+)}_{(l_1,l_2),(l_1',l_2')}$ is given as 
\begin{align}
&Q^{(+)}_{(l_1,l_2),(l_1',l_2')}
= \left\{ \begin{array}{ll} A^{(+)}_{\varDelta l_1,\varDelta l_2}, & \mbox{if $\varDelta l_1,\varDelta l_2\in\mathbb{H}$}, \cr 
O, & \mbox{otherwise}, \end{array} \right. 
\end{align}
where $\varDelta l_1=l_1'-l_1$ and $\varDelta l_2=l_2'-l_2$. 
%
%
$\{\bY^{(1)}(t)\}$ is a CTMC on the state space $\calS^{(1)} = (\mathbb{Z}\times\{0\}\times S_1) \cup (\mathbb{Z}\times\mathbb{N}\times S_+)$ and it is generated from $\{\bY(t)\}$ by removing the boundary on the $l_2$-axis.  
The infinitesimal generator of $\{\bY^{(1)}(t)\}$, $Q^{(1)}$, is represented in block form as
\[
Q^{(1)} = \left( Q^{(1)}_{(l_1,l_2),(l_1',l_2')}; (l_1,l_2),(l_1',l_2')\in\mathbb{Z}\times\mathbb{Z}_+ \right),
\]
and each block $Q^{(1)}_{(l_1,l_2),(l_1',l_2')}$ is given as 
\begin{align}
&Q^{(1)}_{(l_1,l_2),(l_1',l_2')} 
= \left\{ \begin{array}{ll} 
A^{(1)}_{\varDelta l_1,\varDelta l_2}, 
& \mbox{if $l_2=0$, $\varDelta l_1\in\mathbb{H}$, $\varDelta l_2\in\mathbb{H}_+$}, \cr
& \mbox{or if $l_2=1$, $\varDelta l_1\in\mathbb{H}$, $\varDelta l_2=-1$}, \cr
A^{(+)}_{\varDelta l_1,\varDelta l_2}, 
& \mbox{if  $l_2\ge 1$, $\varDelta l_1\in\mathbb{H}$, $\varDelta l_2\in\mathbb{H}_+$}, \cr 
& \mbox{or if $l_2\ge 2$, $\varDelta l_1\in\mathbb{H}$, $\varDelta l_2=-1$}, \cr
O, & \mbox{otherwise}, 
\end{array} \right. 
\end{align}
where $\varDelta l_1=l_1'-l_1$ and $\varDelta l_2=l_2'-l_2$. 
%
$\{\bY^{(2)}(t)\}$ is a CTMC on the state space $\calS^{(2)} = (\{0\}\times\mathbb{Z}\times S_2) \cup (\mathbb{N}\times\mathbb{Z}\times S_+)$ and it is generated from $\{\bY(t)\}$ by removing the boundary on the $l_1$-axis. 
The infinitesimal generator of $\{\bY^{(2)}(t)\}$, $Q^{(2)}$, is represented in block form as
\[
Q^{(2)} = \left( Q^{(2)}_{(l_1,l_2),(l_1',l_2')}; (l_1,l_2),(l_1',l_2')\in\mathbb{Z}_+\times\mathbb{Z} \right),
\]
and each block $Q^{(2)}_{(l_1,l_2),(l_1',l_2')}$ is given as 
\begin{align}
&Q^{(2)}_{(l_1,l_2),(l_1',l_2')} 
= \left\{ \begin{array}{ll} 
A^{(2)}_{\varDelta l_1,\varDelta l_2}, 
& \mbox{if $l_1=0$, $\varDelta l_1\in\mathbb{H}_+$, $\varDelta l_2\in\mathbb{H}$}, \cr
& \mbox{or if $l_1=1$, $\varDelta l_1=-1$, $\varDelta l_2\in\mathbb{H}$}, \cr
A^{(+)}_{\varDelta l_1,\varDelta l_2}, 
& \mbox{if  $l_1\ge 1$, $\varDelta l_1\in\mathbb{H}_+$, $\varDelta l_2\in\mathbb{H}$}, \cr 
& \mbox{or if $l_1\ge 2$, $\varDelta l_1=-1$, $\varDelta l_2\in\mathbb{H}$}, \cr
O, & \mbox{otherwise}, 
\end{array} \right. 
\end{align}
where $\varDelta l_1=l_1'-l_1$ and $\varDelta l_2=l_2'-l_2$. 
The CTMCs $\{\bY^{(+)}(t)\}$, $\{\bY^{(1)}(t)\}$ and $\{\bY^{(2)}(t)\}$ are used for defining the induced CTMCs of $\{\bY(t)\}$.

%
%
\subsection{Induced CTMCs and mean transition rate vectors} \label{sec:mean_increment}

We define the induced CTMCs and mean transition rate vectors of the 2d-QBD process $\{\bY(t)\}$, according to \cite{Fayolle95,Malyshev81}. 
%
For $\{\bY(t)\}$, there are three induced CTMCs: $\calL^{(+)}$, $\calL^{(1)}$ and $\calL^{(2)}$. The induced CTMC $\calL^{(+)}$ is the phase process of $\{\bY^{(+)}(t)\}$, i.e., $\calL^{(+)} = \{J^{(+)}(t)\}$. The state space of $\calL^{(+)}$ is given by $S_+$ and the infinitesimal generator by $A^{(+)}_{*,*}$. 
The induced CTMCs $\calL^{(1)}$ and $\calL^{(2)}$ are the non-space-homogeneous parts of $\{\bY^{(1)}(t)\}$ and $\{\bY^{(2)}(t)\}$, respectively, and they are given as $\calL^{(1)} = \{(L^{(1)}_2(t),J^{(1)}(t))\}$ and $\calL^{(2)} = \{(L^{(2)}_1(t),J^{(2)}(t))\}$. 
The state space of $\calL^{(1)}$ is given by $(\{0\}\times S_1)\cup(\mathbb{N}\times S_+)$ and that of $\calL^{(2)}$ by $(\{0\}\times S_2)\cup(\mathbb{N}\times S_+)$. $\calL^{(1)}$ and $\calL^{(2)}$ are ordinary QBD processes and  their infinitesimal generators, denoted by $A^{(1)}_*$ and $A^{(2)}_*$, are given in block tri-diagonal form as 
\[
A^{(1)}_* = 
\begin{pmatrix}
A^{(1)}_{*,0} & A^{(1)}_{*,1} & & & \cr
A^{(1)}_{*,-1} & A^{(+)}_{*,0} & A^{(+)}_{*,1} & & \cr
& A^{(+)}_{*,-1} & A^{(+)}_{*,0} & A^{(+)}_{*,1} & \cr
& & \ddots & \ddots & \ddots 
\end{pmatrix},\ 
A^{(2)}_* = 
\begin{pmatrix}
A^{(2)}_{0,*} & A^{(2)}_{1,*} & & & \cr
A^{(2)}_{-1,*} & A^{(+)}_{0,*} & A^{(+)}_{1,*} & & \cr
& A^{(+)}_{-1,*} & A^{(+)}_{0,*} & A^{(+)}_{1,*} & \cr
& & \ddots & \ddots & \ddots 
\end{pmatrix}. 
\]

In the 2d-QBD process arising from a two-queue model, the induced CTMCs $\calL^{(+)}$, $\calL^{(1)}$ and $\calL^{(2)}$ may become reducible. For example, in the 2d-QBD process arising from a two-class non-preemptive priority queue with setup times, which will be considered in Section \ref{sec:example}, $\calL^{(+)}$ is reducible and has just one irreducible class (closed communication class). 
Furthermore, in that 2d-QBD process, $\calL^{(1)}$ is reducible and has no irreducible class.  
Therefore, we assume the following conditions throughout the paper.

\begin{assumption} \label{as:calL_irreducible}
The induced CTMC $\calL^{(+)}$ has just one irreducible class. 
\end{assumption}

\begin{assumption} \label{as:calL12_irreducible}
Both the induced CTMCs $\calL^{(1)}$ and $\calL^{(2)}$ have at most one irreducible class. If $\calL^{(1)}$ (resp.\ $\calL^{(2)}$) has just one irreducible class, the irreducible class is a countably infinite set and every state in the irreducible class is accessible from any state of $\calL^{(1)}$ (resp.\ $\calL^{(2)}$). 
\end{assumption}

These assumptions are not essential and we can easily extend them. For example, $\calL^{(+)}$ may have several irreducible classes. We adopt these assumptions since they are sufficiently wide in analyzing queueing models and they also make discussion of stability for 2d-QBD processes simple. 
%
Under Assumption \ref{as:calL_irreducible}, since the state space of $\calL^{(+)}$, $S_+$, is finite, $\calL^{(+)}$ always has a unique stationary distribution. We denote it by $\bpi^{(+)}_{*,*}$.
The mean transition rate vector with respect to $\calL^{(+)}$, $\ba^{(+)}=(a^{(+)}_1,a^{(+)}_2)$, is defined as 
\begin{align}
&a^{(+)}_1 = \bpi^{(+)}_{*,*} (-A^{(+)}_{-1,*}+A^{(+)}_{1,*}) \bone,\quad 
a^{(+)}_2 = \bpi^{(+)}_{*,*} (-A^{(+)}_{*,-1}+A^{(+)}_{*,1}) \bone. 
\end{align}
From the definition, we see that $\ba^{(+)}$ is the mean transition rate vector of the level process  $\{(L^{(+)}_1(t),L^{(+)}_2(t))\}$ of $\{\bY^{(+)}(t)\}$. 
%

If the induced CTMC $\calL^{(1)}$ (resp.\ $\calL^{(2)}$) has no irreducible classes, all the states of $\calL^{(1)}$ (resp.\ $\calL^{(2)}$) are transient. In that case, we do not define the mean transition rate vector with respect to $\calL^{(1)}$ (resp.\ $\calL^{(2)}$). 
If $\calL^{(1)}$ (resp.\ $\calL^{(2)}$) has just one irreducible class, we denote the irreducible class by $\calS^{(1)}_{irr}$ (resp.\ $\calS^{(2)}_{irr}$).

\begin{remark} \label{re:calL12_properties}
Under Assumption \ref{as:calL12_irreducible}, if $\calL^{(1)}$ has just one irreducible class, $\calS^{(1)}_{irr}$ is countably infinite and we have that $\calS^{(1)}_{irr}\cap(\{0\}\times S_1)\ne\emptyset$ and, for every $k\in\mathbb{N}$, $\calS^{(1)}_{irr}\cap(\{k\}\times S_+)\ne\emptyset$. 
%
%
A similar result also holds for $\calL^{(2)}$. 
\end{remark}

We see from Remark \ref{re:calL12_properties} that, under Assumption \ref{as:calL12_irreducible}, if $\calL^{(1)}$ (resp.\ $\calL^{(2)}$) has just one irreducible class, it is an ordinary QBD process. 
In that case, let $R^{(1)}$ (resp.\ $R^{(2)}$) be the rate matrix of $\calL^{(1)}$ (resp.\ $\calL^{(2)}$).  $R^{(1)}$ and $R^{(2)}$ are the minimum nonnegative solutions to the following matrix quadratic equations:
\begin{align}
&(R^{(1)})^2 A^{(+)}_{*,-1} + R^{(1)} A^{(+)}_{*,0} + A^{(+)}_{*,1} = O,\\ 
&(R^{(2)})^2 A^{(+)}_{-1,*} + R^{(2)} A^{(+)}_{0,*} + A^{(+)}_{1,*} = O.
\end{align}
If $a^{(+)}_2<0$, then $\calL^{(1)}$ has a unique stationary distribution $\bpi^{(1)}_*=(\bpi^{(1)}_{*,l},l\in\mathbb{Z}_+)$ given as 
\begin{align}
\bpi^{(1)}_{*,l} = \bpi^{(1)}_{*,0} A^{(1)}_{*,1}  (-A^{(+)}_{*,0}-R^{(1)} A^{(+)}_{*,-1})^{-1} (R^{(1)})^{l-1},\ l\ge 1,
\end{align}
and the mean transition rate vector with respect to $\calL^{(1)}$, $\ba^{(1)}=(a^{(1)}_1,a^{(1)}_2)$, is defined as
\begin{align}
a^{(1)}_1 &= \bpi_{*,0}^{(1)}\big( (-A_{-1,0}^{(1)}+A_{1,0}^{(1)})\bone_{s_1}+(-A_{-1,1}^{(1)}+A_{1,1}^{(1)})\bone_{s_+} \big) \cr
&\quad + \bpi_{*,1}^{(1)}\big( (-A_{-1,-1}^{(1)}+A_{1,-1}^{(1)})\bone_{s_1}+(-A^{(+)}_{-1,0}-A^{(+)}_{-1,1}+A^{(+)}_{1,0}+A^{(+)}_{1,1})\bone_{s_+} \big)\cr 
&\quad + \bpi_{*,2}^{(1)} (I-R^{(1)})^{-1} (-A^{(+)}_{-1,*}+A^{(+)}_{1,*})\bone_{s_+},\\
a^{(1)}_2 &= \bpi_{*,0}^{(1)} A_{*,1}^{(1)}\bone_{s_+} + \bpi_{*,1}^{(1)}\big( -A_{*,-1}^{(1)}\bone_{s_1}+A^{(+)}_{*,1}\bone_{s_+} \big) + \bpi_{*,2}^{(1)} (I-R^{(1)})^{-1} (-A^{(+)}_{*,-1}+A^{(+)}_{*,1})\bone_{s_+}  \cr
&= 0. 
\end{align}
If $a^{(+)}_2=0$, then $\calL^{(1)}$ is null recurrent and if $a^{(+)}_2>0$, then it is transient. In these cases, the mean transition rate vector $\ba^{(1)}$ is undefined. 
If $a^{(+)}_1<0$, then $\calL^{(2)}$ has a unique stationary distribution $\bpi^{(2)}_*=(\bpi^{(2)}_{*,l},l\in\mathbb{Z}_+)$ given as 
\begin{align}
\bpi^{(2)}_{*,l} = \bpi^{(2)}_{*,0} A^{(2)}_{1,*}  (-A^{(+)}_{0,*}-R^{(2)} A^{(+)}_{-1,*})^{-1} (R^{(2)})^{l-1},\ l\ge 1,
\end{align}
and the mean transition rate vector with respect to $\calL^{(2)}$, $\ba^{(2)}=(a^{(2)}_1,a^{(2)}_2)$, is defined as
\begin{align}
a^{(2)}_1 &= \bpi_{*,0}^{(2)} A_{1,*}^{(2)}\bone_{s_+} + \bpi_{*,1}^{(2)}\big( -A_{-1,*}^{(2)}\bone_{s_2}+A^{(+)}_{1,*}\bone_{s_+} \big) + \bpi_{*,2}^{(2)} (I-R^{(2)})^{-1} (-A^{(+)}_{-1,*}+A^{(+)}_{1,*})\bone_{s_+}  \nonumber \\
&= 0. \\
a^{(2)}_2 &= \bpi_{*,0}^{(2)}\big( (-A_{0,-1}^{(2)}+A_{0,1}^{(2)})\bone_{s_2}+(-A_{1,-1}^{(2)}+A_{1,1}^{(2)})\bone_{s_+} \big) \cr
&\quad + \bpi_{*,1}^{(2)}\big( (-A_{-1,-1}^{(2)}+A_{-1,1}^{(2)})\bone_{s_2}+(-A^{(+)}_{0,-1}-A^{(+)}_{1,-1}+A^{(+)}_{0,1}+A^{(+)}_{1,1})\bone_{s_+} \big)\cr 
&\quad + \bpi_{*,2}^{(2)} (I-R^{(2)})^{-1} (-A^{(+)}_{*,-1}+A^{(+)}_{*,1})\bone_{s_+}.
\end{align}
If $a^{(+)}_1=0$, then $\calL^{(2)}$ is null recurrent and if $a^{(+)}_1>0$, then $\calL^{(2)}$ is transient. In these cases, the mean transition rate vector $\ba^{(2)}$ is undefined. 
From the definitions, we see that, for $i\in\{1,2\}$, if $\ba^{(i)}$ is well defined, it is the mean transition rate vector of the level process $\{(L^{(i)}_1(t),L^{(i)}_2(t))\}$ of $\{\bY^{(i)}(t)\}$. 
%

%
%
\subsection{Positive recurrence and transience} \label{sec:stability_cond}

Conditions ensuring the 2d-QBD process is positive recurrent or transient are given as follows. We will prove this theorem in Section \ref{sec:proof}. 
%
\begin{theorem} \label{th:stability_cond1}
\begin{itemize}
\item[(i)] In the case where $a^{(+)}_1<0$ and $a^{(+)}_2<0$, the 2d-QBD process $\{\bY(t)\}$ is positive recurrent if $a^{(1)}_1<0$ and $a^{(2)}_2<0$, and it is transient if either $a^{(1)}_1>0$ or $a^{(2)}_2>0$. 
\item[(ii)] In the case where $a^{(+)}_1\ge 0$ and $a^{(+)}_2<0$, $\{\bY(t)\}$ is positive recurrent if $a^{(1)}_1<0$, and it is transient if $a^{(1)}_1>0$. 
\item[(iii)] In the case where $a^{(+)}_1<0$ and $a^{(+)}_2\ge 0$, $\{\bY(t)\}$ is positive recurrent if $a^{(2)}_2<0$, and it is transient if $a^{(2)}_2>0$. 
\item[(iv)] If one of $a^{(+)}_1$ and $a^{(+)}_2$ is positive and the other is non-negative, then $\{\bY(t)\}$ is transient.
\end{itemize}
\end{theorem}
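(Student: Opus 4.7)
The plan is to first reduce to discrete time by uniformization: choose $\nu$ exceeding the maximum of $|q_{xx}|$ over states $x$, and consider the discrete-time 2d-QBD process with transition matrix $P=I+Q/\nu$. This DTMC inherits the block structure, shares the stationary distribution with $\{\bY(t)\}$, and is positive recurrent (resp.\ transient) iff $\{\bY(t)\}$ is. The theorem then parallels the Malyshev/Fayolle stability conditions for two-dimensional reflecting random walks, with the unmodulated mean increments replaced by the phase-averaged vectors $\ba^{(+)},\ba^{(1)},\ba^{(2)}$; this reflects the fact that far from the origin the phase and the ``transverse'' level coordinate relax to the stationary regime of the appropriate induced chain.

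For positive recurrence in case (i), I would construct a Lyapunov function of the form
\[
V(l_1,l_2,j)=\alpha l_1+\beta l_2+g(l_1,l_2,j),
\]
where $\alpha,\beta>0$ are chosen so that $\alpha a^{(+)}_1+\beta a^{(+)}_2<0$ (possible because both entries are negative) and $g$ is a bounded correction, defined piecewise on three regions: a deep interior block $\{l_1\ge M,\,l_2\ge M\}$, an $l_1$-axis strip $\{l_1\ge M,\,l_2<M\}$, and the symmetric $l_2$-axis strip. In each strip, $g$ is chosen as a solution of a Poisson equation for the induced QBD $\calL^{(1)}$ (resp.\ $\calL^{(2)}$), so that on that strip the generator applied to $V$ equals $\alpha a^{(1)}_1$ (resp.\ $\beta a^{(2)}_2$) up to errors that vanish as $l_1\to\infty$ (resp.\ $l_2\to\infty$); in the interior block it is $\alpha a^{(+)}_1+\beta a^{(+)}_2$ up to boundary-layer corrections from $g$. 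Under the hypotheses this gives $QV\le -\varepsilon$ outside a finite set and Foster's criterion applies. Case (iv) is simplest: $V=l_1+l_2$ already has strictly positive drift uniformly outside a finite set, which gives transience. For cases (ii) and (iii) I would set $\alpha>0,\beta=0$ (or vice versa), use a Poisson correction in $l_2$ (resp.\ $l_1$) alone to absorb the negative interior drift $a^{(+)}_2$ (resp.\ $a^{(+)}_1$), and treat only the one axis strip.

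For the transience halves of (i)--(iii), I would use a bounded-function criterion (Meyn--Tweedie / Kaplan type): construct a bounded $h$ with $Qh\ge 0$ outside a finite set, with $h$ strictly larger on the unbounded component than its value on the exceptional set. For case (i) with $a^{(1)}_1>0$, take $h(l_1,l_2,j)=1-(l_1+1)^{-\gamma}$ plus a bounded Poisson correction against $\calL^{(1)}$, chosen so that $Qh\ge 0$ on the $l_1$-axis strip (where averaged drift is $a^{(1)}_1>0$) and also on the interior region (where $\gamma$ small makes the $(l_1+1)^{-\gamma}$ term dominated by the positive-drift contribution along $l_1$). The symmetric choice handles $a^{(2)}_2>0$, and case (iv) again follows from elementary linear drift.

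The technical crux is the axis-strip analysis. A naive linear function $V=\alpha l_1$ does not produce drift $\alpha a^{(1)}_1$ at an axis-adjacent state because $a^{(1)}_1$ is an average against the stationary law of the countably infinite induced QBD $\calL^{(1)}$, whereas the actual conditional phase/$l_2$-distribution at any given state depends on history. The Poisson correction $g$ compensates for this; its existence as a bounded function requires geometric ergodicity of $\calL^{(1)}$, which holds because $a^{(+)}_2<0$ forces the spectral radius of $R^{(1)}$ strictly below one. Equally delicate is the gluing of the three region-wise definitions of $g$: the jumps of $g$ across the region boundaries must contribute only $o(1)$ to $QV$ so as not to spoil the interior drift estimate. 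Executing this gluing cleanly, and showing the remaining error is uniformly $o(1)$ on the unbounded part of each region, is the main technical obstacle of the proof.
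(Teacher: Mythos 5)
Your reduction to discrete time by uniformization matches the paper exactly, but from there the paper takes a different and more robust route than your one-step ``linear plus Poisson corrector'' scheme, and your sketch has concrete gaps. First, even your ``simplest'' case (iv) is wrong as stated: $V=l_1+l_2$ does \emph{not} have uniformly positive one-step drift outside a finite set, because the drift is phase-modulated (in the paper's priority-queue example, the phase in which the server works on class-2 gives one-step drift $\lambda_1+\lambda_2-\mu_2$, which can be negative); only the \emph{averaged} drift $\bar{a}^{(+)}_1+\bar{a}^{(+)}_2$ is positive. The paper fixes exactly this by working with the embedded chain $\hat{\bY}_n=\bY_{\sigma_n}$ that takes $u_i$ steps inside each region $\calV_i$, and proving (Propositions \ref{pr:approximation_g} and \ref{pr:approximation_g12}) that the time-averaged increment $\bg^{(i)}_{\by}(u_i)$ is uniformly within $\varepsilon$ of $\bar{\ba}^{(i)}$ (or of $\bar{\ba}^{(+)}$ when the induced chain is transient/null), so that a purely \emph{linear} test function $f(\by)=\langle\bl,\bw\rangle$ suffices in the multi-step Foster and transience criteria (Propositions \ref{pr:Foster2} and \ref{pr:Markov_unstable2}). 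Second, your claim that the Poisson corrector $g$ is bounded, with boundedness following from ${\rm sp}(R^{(1)})<1$, is false: the forcing (local $l_1$-drift minus $a^{(1)}_1$) does not vanish as $l_2\to\infty$ but tends to the free drift, so the Poisson solution grows linearly in $l_2$ (geometric ergodicity of a countable QBD is not uniform ergodicity). A linear-growth corrector can still be salvaged, but then your piecewise gluing is not a bounded perturbation and the ``$o(1)$ error'' assertion needs a real proof that you have not supplied.

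The most serious gap is in cases (ii) and (iii), where you propose to ``treat only the one axis strip.'' With a one-step Foster criterion the opposite strip $\calV_2$ (small $l_1$, large $l_2$) is an infinite set on which you must also verify negative drift, and there the relevant induced chain $\calL^{(2)}$ has \emph{no} stationary distribution when $a^{(+)}_1\ge 0$, so no Poisson corrector exists; the one-step drift on that strip is whatever the local $A^{(2)}$ blocks give and is not controlled by any of your averaged quantities. The paper handles this precisely through Proposition \ref{pr:approximation_g12}(ii): when $\bar{a}^{(+)}_{3-i}\ge 0$ the $u_i$-step averaged drift on $\calV^{(i)}_0$ converges uniformly to the free drift $\bar{\ba}^{(+)}$, and the weight $\bw=(-\bar{a}^{(+)}_2,\,1+\bar{a}^{(+)}_1)$ makes $\langle\bar{\ba}^{(+)},\bw\rangle=\bar{a}^{(+)}_2<0$ on that strip. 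Your transience construction has an analogous flaw: adding a bounded $O(1)$ Poisson correction to $h=1-(l_1+1)^{-\gamma}$ swamps the $O(l_1^{-\gamma})$ scale on which $h$ varies at infinity, so the bounded-function (Kaplan/Meyn--Tweedie) criterion cannot be verified as written; a correction of matching multiplicative scale would be needed. The paper avoids all of this by applying the Malyshev-type criterion (Proposition \ref{pr:Markov_unstable2}) to the embedded chain with a linear $f$, choosing $\bw$ (with a negative component) and the level $c$ so that the set $\calA_{\bw,c}$ simply does not meet the uncontrollable region (e.g., $\scrI_{\bw,c}=\{1,+\}$ when $\bar{a}^{(+)}_2<0$ and $\bar{a}^{(1)}_1>0$), which is an idea absent from your proposal and essential to making the transience halves go through.
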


\begin{remark} \label{re:further_study1}
The following cases are excluded from Theorem \ref{th:stability_cond1}. 

\medskip
\begin{tabular}{rl}
(a-1) & $a^{(+)}_1<0$, $a^{(+)}_2<0$, $a^{(1)}_1=0$ and $a^{(2)}_2\le 0$. \cr
(a-2) & $a^{(+)}_1<0$, $a^{(+)}_2<0$, $a^{(1)}_1\le 0$ and $a^{(2)}_2=0$. \cr
(b) & $a^{(+)}_1\ge 0$, $a^{(+)}_2<0$ and $a^{(1)}_1=0$. \cr
(c) & $a^{(+)}_1< 0$, $a^{(+)}_2\ge 0$ and $a^{(2)}_2=0$. \cr
(d) & $a^{(+)}_1=a^{(+)}_2=0$.
\end{tabular}
\medskip

We know that 2d-RRWs are null recurrent in the cases corresponding to  (a-1) through (c) (see Theorem 3.3.2 of \cite{Fayolle95}). 
Similar results are expected to hold for 2d-QBD processes. 
In \cite{Fayolle95}, the case corresponding to (d) is called the case of zero drifts. In that case, the 2d-QBD process may become positive recurrent (for the case of 2d-RRW, see Theorem 3.4.1 of \cite{Fayolle95}). 
To clarify these points, we need a method different from that used for proving Theorem \ref{th:stability_cond1} in Section \ref{sec:proof}.  We, therefore, leave it as a further study. 
\end{remark}

%
%
\section{Efficiency of two-queue models: examples} \label{sec:example} 

%
%
\subsection{Two-queue model}

We consider a queueing model with two customer classes, depicted in Fig.\ \ref{fig:twoqueue}. Class-1 customers arrive according to an arrival process with arrival rate $\lambda_1$ and enter queue 1 (Q$_1$). Class-2 customers arrive according to another arrival process with arrival rate $\lambda_2$ and enter queue 2 (Q$_2$). In the system, there are $c$ servers ($c\ge 1$) and they serve customers according to some kind of service policy. After completion of service, customers leave the system without reentrance. We refer to this queueing model as a two-queue model. 
Let $h_1$ and $h_2$ be the mean service times of class-1 customers and class-2 customers, respectively. Then, the traffic intensity of the two-queue model per server is given by $\rho=(\lambda_1 h_1+\lambda_2 h_2)/c$. 
We define the efficiency of the two-queue model, denoted by $\rho^*$, as follows. Let the value of $\lambda_1$ (or $\lambda_2$) increase without changing the stochastic nature of the arrival process up to the value at which the model becomes unstable for the first time. Denote that value of $\lambda_1$ (resp.\ $\lambda_2$) by $\lambda_1^*$ (resp.\ $\lambda_2^*$) and give $\rho^*$ as $\rho^*=(\lambda_1^* h_1+\lambda_2 h_2)/c$ (resp.\ $\rho^*=(\lambda_1 h_1+\lambda_2^* h_2)/c$). 
For example, if the original arrival process of class-1 customers is given by a Markovian arrival process (MAP) with representation $(C,D)$, then the arrival process of class-1 customers with arrival rate $\lambda_1$ is given by the MAP with representation $((\lambda_1/\hat{\lambda}_1) C, (\lambda_1/\hat{\lambda}_1) D)$, where $\hat{\lambda}_1$ is the mean arrival rate of the original MAP. 
We call $\rho^*$ the efficiency of the two-queue model. If it is possible to exhaustively use the ability of the servers for customer service, the value of $\rho^*$ becomes $1$.  The vector $(\lambda_1^*,\lambda_2)$ (resp.\ $(\lambda_1,\lambda_2^*)$) corresponds to the maximum throughput vector of the two-queue model.

\begin{figure}[htbp]
\begin{center}
\includegraphics[width=8cm,trim=250 420 300 30]{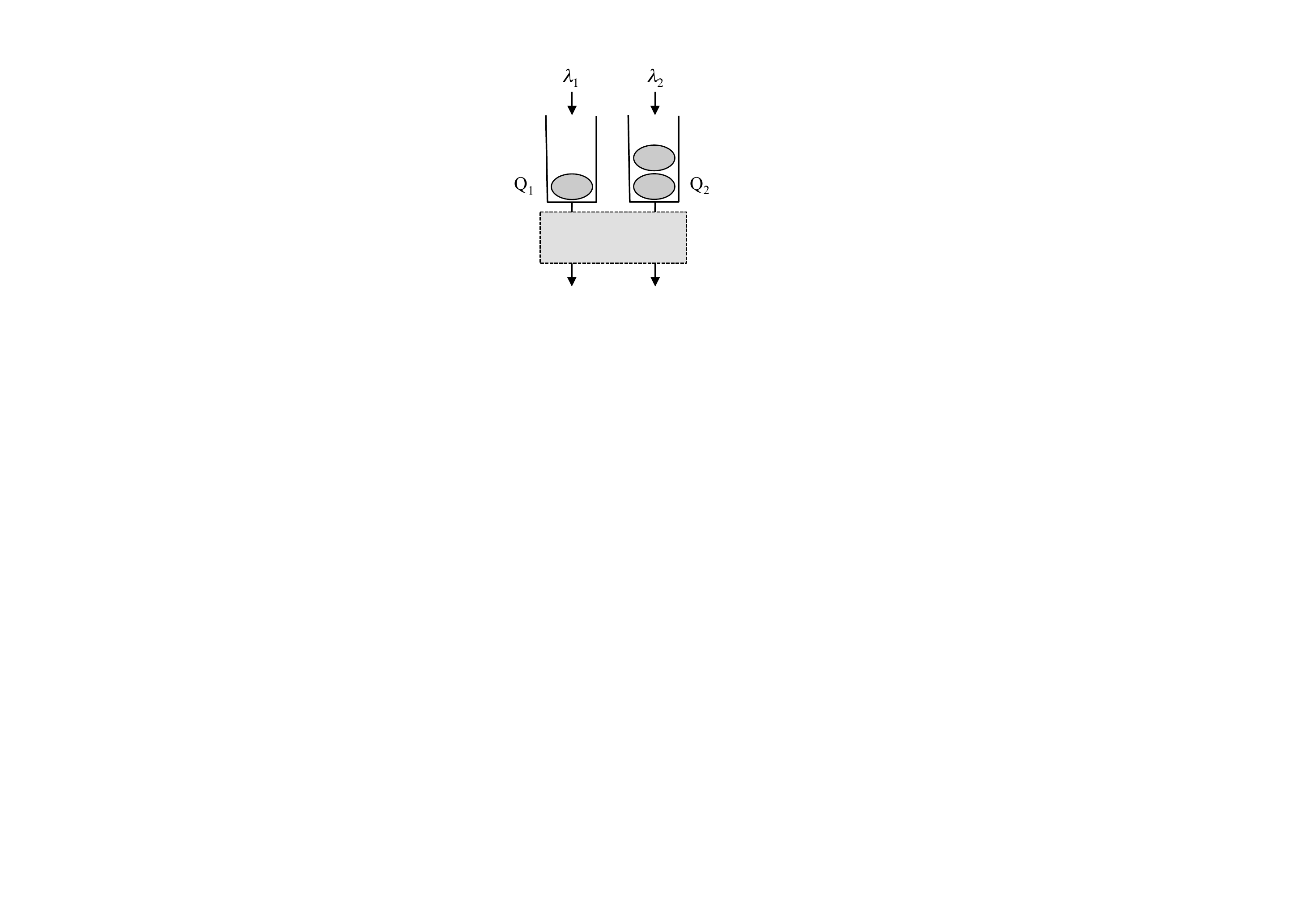} 
\caption{Two-queue model}
\label{fig:twoqueue}
\end{center}
\end{figure}

In the following subsections, we consider two kinds of priority queueing model in order to demonstrate how our results work. In each model, there are two queues that interact with each other and the stability condition of the model is not so trivial. 
Note that we do not intend to propose new queueing models here; we just present examples to understand our results. 

%
%
\subsection{Priority queue with setup times}

The first example is a single-server two-class non-preemptive priority queue with setup times. Class-1 customers arrive according to a Poisson process with intensity $\lambda_1$ and class-2 customers according to another Poisson process with intensity $\lambda_2$. Service times for class-1 customers are subject to an exponential distribution with mean $1/\mu_1$ and those for class-2 customers subject to another exponential distribution with mean $1/\mu_2$. The traffic intensity $\rho$ is given as $\rho=\lambda_1/\mu_1+\lambda_2/\mu_2$. 
Class-1 customers have non-preemptive priority over class-2 customers. 
The idle server needs a setup time to restart service for customers. Furthermore, after completing service for a class-1 customer (resp.\ class-2 customer), the server also needs a setup time if a customer to be served next is of class-2 (resp.\ of class-1). Setup times for class-1 customer's service are subject to an exponential distribution with mean $1/\gamma_1$ and those for class-2 customer's service subject to another exponential distribution with mean $1/\gamma_2$. 
We assume that the arrival processes, service times and setup times are mutually independent. 

For $i\in\{1,2\}$, let $L_i(t)$ be the number of class-$i$ customers in the system at time $t$. 
Let $J(t)$ be the server state at time $t$, which is defined as follows. When $L_1(t)=L_2(t)=0$, $J(t)$ takes the value of $1$, which means that the server is idle. 
When $L_1(t)>0$ and $L_2(t)=0$, $J(t)$ takes a value in $\{1,2\}$, where $J(t)=1$ means that the server is engaging in service for a class-1 customer and $J(t)=2$ that it is engaging in setup for class-1 customer's service.   
When $L_1(t)=0$ and $L_2(t)>0$, $J(t)$ takes a value in $\{1,2\}$, where $J(t)=1$ means that the server is engaging in service for a class-2 customer and $J(t)=2$ that it is engaging in setup for class-2 customer's service. 
When $L_1(t)>0$ and $L_2(t)>0$, $J(t)$ takes a value in $\{1,2,3,4\}$, where $J(t)=1$ means that the server is engaging in service for a class-1 customer, $J(t)=2$ that it is engaging in setup for class-1 customer's service, $J(t)=3$ that it is engaging in service for a class-2 customer and $J(t)=4$ that it is engaging in setup for class-2 customer's service. 
Then, the process $\{\bY(t)\}=\{((L_1(t),L_2(t)),J(t))\}$ is a 2d-QBD process and it is governed by the infinitesimal generator $Q$ composed of the following block matrices:
\begin{align*}
&A^{(+)}_{-1,0} = \begin{pmatrix}
\mu_1 & 0 & 0 & 0 \cr
0 & 0 & 0 & 0 \cr
0 & 0 & 0 & 0 \cr
0 & 0 & 0 & 0 
\end{pmatrix},\  
A^{(+)}_{0,0} = \begin{pmatrix}
-(\lambda+\mu_1) & 0 & 0 & 0 \cr
\gamma_1 & -(\lambda+\gamma_1) & 0 & 0 \cr
0 & 0 & -(\lambda+\mu_2) & 0 \cr
0 & 0 & \gamma_2 & -(\lambda+\gamma_2)
\end{pmatrix}, \\
&A^{(+)}_{0,-1} = \begin{pmatrix}
0 & 0 & 0 & 0 \cr
0 & 0 & 0 & 0 \cr
0 & \mu_2 & 0 & 0 \cr
0 & 0 & 0 & 0 
\end{pmatrix},\ 
A^{(1)}_{0,-1} = \begin{pmatrix}
0 & 0 \cr
0 & 0 \cr
0 & \mu_2 \cr
0 & 0  
\end{pmatrix},\ 
A^{(2)}_{-1,0} = \begin{pmatrix}
0 & \mu_1 \cr
0 & 0 \cr
0 & 0 \cr
0 & 0  
\end{pmatrix},\\
&A^{(+)}_{1,0}=\lambda_1 I,\ 
A^{(+)}_{0,1}=\lambda_2 I,\ 
A^{(+)}_{1,1}=A^{(+)}_{-1,1}=A^{(+)}_{1,-1}=A^{(+)}_{-1,-1}=O, 
\end{align*}
\begin{align*}
&A^{(1)}_{-1,0} = \begin{pmatrix}
\mu_1 & 0 \cr
0 & 0  
\end{pmatrix},\ 
A^{(1)}_{0,0} = \begin{pmatrix}
-(\lambda+\mu_1) & 0 \cr
\gamma_1 & -(\lambda+\gamma_1)  
\end{pmatrix},\ 
A^{(1)}_{0,1} = \lambda_2 \begin{pmatrix}
1 & 0 & 0 & 0 \cr
0 & 1 & 0 & 0  
\end{pmatrix},\\
&A^{(1)}_{1,0}=\lambda_1 I,\ 
A^{(1)}_{1,1}=A^{(1)}_{-1,1}=O,\ A^{(1)}_{1,-1}=A^{(1)}_{-1,-1}=O, 
\end{align*}
\begin{align*}
&A^{(2)}_{0,0} = \begin{pmatrix}
-(\lambda+\mu_2) & 0 \cr
\gamma_2 & -(\lambda+\gamma_2)  
\end{pmatrix},\ 
A^{(2)}_{0,-1} = \begin{pmatrix}
\mu_2 & 0 \cr
0 & 0  
\end{pmatrix},\ 
A^{(2)}_{1,0} = \lambda_1 \begin{pmatrix}
0 & 0 & 1 & 0 \cr
0 & 0 & 0 & 1  
\end{pmatrix},\\
&A^{(2)}_{0,1}=\lambda_2 I,\ 
A^{(2)}_{1,1}=A^{(2)}_{1,-1}=O,\ A^{(2)}_{-1,1}=A^{(2)}_{-1,-1}=O, 
\end{align*} 
\begin{align*}
&A^{(0)}_{-1,0} = \begin{pmatrix}
\mu_1 \cr 0  
\end{pmatrix},\ 
A^{(0)}_{0,0} = -\lambda,\ 
A^{(0)}_{0,-1} = \begin{pmatrix}
\mu_2 \cr 0  
\end{pmatrix},\ 
A^{(0)}_{1,0} = \lambda_1 \begin{pmatrix}
0 & 1  
\end{pmatrix},\ 
A^{(0)}_{0,1} = \lambda_2 \begin{pmatrix}
0 & 1  
\end{pmatrix},\\
&A^{(0)}_{1,1}=\bzero^\top,\ A^{(0)}_{-1,-1}=\bzero,\ 
A^{(0)}_{-1,1}=A^{(0)}_{1,-1}=O, 
\end{align*} 
where $\lambda=\lambda_1+\lambda_2$. The state space of $\{\bY(t)\}$ is given by $\calS=(\{0\}\times\{0\}\times S_0)\cup(\{0\}\times\mathbb{N}\times S_1)\cup(\mathbb{N}\times\{0\}\times S_2)\cup(\mathbb{N}^2\times S_+)$, where $S_0=\{1\}$, $S_1=S_2=\{1,2\}$, $S_+=\{1,2,3,4\}$. 
%

The infinitesimal generator of the induced CTMC $\calL^{(+)}=\{J^{(+)}(t)\}$ is given by
\begin{align*}
&A^{(+)}_{*,*} = \begin{pmatrix}
0 & 0 & 0 & 0 \cr
\gamma_1 & -\gamma_1 & 0 & 0 \cr
0 & \mu_2 & -\mu_2 & 0 \cr
0 & 0 & \gamma_2 & -\gamma_2 
\end{pmatrix}. 
\end{align*}
Hence, $\calL^{(+)}$ is reducible and has just one irreducible class, which is $\{1\}$. The stationary distribution of $\calL^{(+)}$ is given by $\bpi^{(+)}_{*,*}=\begin{pmatrix} 1 & 0 & 0 & 0 \end{pmatrix}$ and the mean transition rate vector $\ba^{(+)}=(a^{(+)}_1,a^{(+)}_2)$ is given as  $a^{(+)}_1=\lambda_1-\mu_1$ and $a^{(+)}_2=\lambda_2>0$. 
%
The nonzero block matrices of the infinitesimal generator of the induced CTMC $\calL^{(1)}=\{(L^{(1)}_2(t),J^{(1)}(t))\}$ are given by
\begin{align*}
&A^{(+)}_{*,-1}=A^{(+)}_{0,-1},\ 
A^{(+)}_{*,0} = \begin{pmatrix}
-\lambda_2 & 0 & 0 & 0 \cr
\gamma_1 & -(\lambda_2+\gamma_1) & 0 & 0 \cr
0 & 0 & -(\lambda_2+\mu_2) & 0 \cr
0 & 0 & \gamma_2 & -(\lambda_2+\gamma_2) 
\end{pmatrix}, \ 
A^{(+)}_{*,1}=\lambda_2 I,\\ 
&A^{(1)}_{*,-1}=A^{(1)}_{0,-1},\ 
A^{(1)}_{*,0} = \begin{pmatrix}
-\lambda_2 & 0 \cr
\gamma_1 & -(\lambda_2+\gamma_1) 
\end{pmatrix},\ 
A^{(1)}_{*,1}=A^{(1)}_{0,1}.
\end{align*}
From the structure of these block matrices, we see that $\calL^{(1)}$ is reducible and has no irreducible classes. 
On the other hand, the nonzero block matrices of the infinitesimal generator of the induced CTMC $\calL^{(2)}=\{(L^{(2)}_1(t),J^{(2)}(t))\}$ are given by
\begin{align*}
&A^{(+)}_{-1,*}=A^{(+)}_{-1,0},\ 
A^{(+)}_{0,*} = \begin{pmatrix}
-(\lambda_1+\mu_1) & 0 & 0 & 0 \cr
\gamma_1 & -(\lambda_1+\gamma_1) & 0 & 0 \cr
0 & \mu_2 & -(\lambda_1+\mu_2) & 0 \cr
0 & 0 & \gamma_2 & -(\lambda_1+\gamma_2) 
\end{pmatrix}, \ 
A^{(+)}_{1,*}=\lambda_1 I,\\ 
&A^{(2)}_{-1,*}=A^{(2)}_{-1,0},\ 
A^{(2)}_{0,*} = \begin{pmatrix}
-\lambda_1 & 0 \cr
\gamma_1 & -(\lambda_1+\gamma_1) 
\end{pmatrix},\ 
A^{(2)}_{1,*}=A^{(2)}_{1,0}.
\end{align*}
From the structure of these block matrices, we see that $\calL^{(2)}$ is irreducible. Hence, if $a^{(+)}_1<0$, $\calL^{(2)}$ is positive recurrent and the mean transition rate vector $\ba^{(2)}=(a^{(2)}_1,a^{(2)}_2)$ is well defined. 
By Theorem \ref{th:stability_cond1}, if $a^{(+)}_1<0$ and $a^{(2)}_2<0$, $\{\bY(t)\}$ is positive recurrent; if $a^{(+)}_1\ge 0$ or if $a^{(+)}_1<0$ and $a^{(2)}_2>0$, it is transient. 

Fixing the values of $\mu_1$, $\mu_2$, $\gamma_1$ and $\gamma_2$ and setting $\lambda_1$ at a value satisfying $a^{(+)}_1=\mu_1-\lambda_1<0$, we can evaluate the value of $\lambda_2$ that makes $a^{(2)}$ equal $0$ by using the bisection method. We denote by $\lambda_2^*$ that value of $\lambda_2$. The efficiency of the model is given by $\rho^*=\lambda_1/\mu_1+\lambda_2^*/\mu_2$ and the maximum throughput vector by $(\lambda_1,\lambda_2^*)$. 
In Table \ref{tab:table1}, we give numerical examples when $\mu_1=\mu_2=1$, $\gamma_1=\gamma_2=2$ and $a^{(+)}_1<0$. From the table, we can see how setup times influence congestion of the system depending on $\lambda_1$. In this case, the efficiency of the model becomes minimum when the value of $\lambda_1$ is around $0.4$. 
Similar evaluation is available even if the arrival processes are replaced with MAPs and the service time distributions as well as the setup time distributions are replaced with phase-type distributions (PH-distributions). We give the representation of the model in that case in Appendix \ref{sec:setup_MAPPH}. 
\begin{table}[htp]
\caption{The values of $\lambda_2$ that make $a^{(2)}$ equal $0$.}
\begin{center}
\begin{tabular}{c|ccccccccc}
$\lambda_1$ & 0.1 & 0.2 & 0.3 & 0.4 & 0.5 & 0.6 & 0.7 & 0.8 & 0.9 \cr \hline
$\lambda_2^*$ & 0.821 & 0.678 & 0.557 & 0.453 & 0.361 & 0.278 & 0.202 & 0.131 & 0.064 \cr
$\rho^*$ & 0.922 & 0.878 & 0.857 & 0.853 & 0.861 & 0.878 & 0.902 & 0.931 & 0.964 
\end{tabular}
\end{center}
\label{tab:table1}
\end{table}%

%
%
\subsection{Two-queue model with an additional server}

The second example is a model related to the N-model. It is composed of two $M/M/1$ queues and an additional server. We denote the two queues by Q$_1$ and Q$_2$, respectively. The additional server can serve customers in both the queues, and customers in Q$_1$ have non-preemptive priority over those in Q$_2$, with respect to use of the additional server. 
It means that, after completing a service, if there exists at least one waiting customer in Q$_1$, the additional server next serves a customer in Q$_1$; if there are no waiting customers in Q$_1$ and there exists at least one waiting customer in Q$_2$, it next serves a customer in Q$_2$; otherwise, it becomes idle. 
Denote by $\lambda_1$ and $\lambda_2$ the arrival rates of Q$_1$ and Q$_2$, respectively, and by $\mu_1$ and $\mu_2$ the service rates of them, respectively. The traffic intensity per server is given by $\rho=(\lambda_1/\mu_1+\lambda_2/\mu_2)/3$. 

\begin{table}[htp]
\caption{States of the servers.}
\begin{center}
\begin{tabular}{cc|cc|cc|cc}
\multicolumn{2}{c|}{$L_1(t)=L_2(t)=0$} & \multicolumn{2}{c|}{$L_1(t)>0$, $L_2(t)=0$} & \multicolumn{2}{c|}{$L_1(t)=0$, $L_2(t)>0$} & \multicolumn{2}{c}{$L_1(t)>0$, $L_2(t)>0$} \cr \hline
$J(t)$ & $(j_1,j_2,j_3)$ & $J(t)$ & $(j_1,j_2,j_3)$ & $J(t)$ & $(j_1,j_2,j_3)$ & $J(t)$ & $(j_1,j_2,j_3)$ \cr \hline
1 & (0,0,0) & 1 & (1,0,1) & 1 & (0,2,2) & 1 & (1,2,1) \cr
2 & (1,0,0) & 2 & (1,2,1) & 2 & (1,2,2) & 2 & (1,2,2) \cr
3 & (0,0,1) & 3 & (1,0,2) & 3 & (0,2,1) &  & \cr
4 & (0,2,0) & & & & & & \cr
5 & (0,0,2) & & & & & & \cr
6 & (1,2,0) & & & & & & \cr
7 & (0,2,1) & & & & & & \cr
8 & (1,0,2) & & & & & & \cr
\end{tabular}
\end{center}
\label{tab:server_state}
\end{table}%

For $i\in\{1,2\}$, let $L^\dag_i(t)$ be the number of customers in Q$_i$ at time $t$. For $i\in\{1,2\}$, define $L_i(t)$ as $L_i(t)=\max\{0, L^\dag_i(t)-1\}$. 
Denote by $(j_1,j_2,j_3)$ the states of the servers: if the server of Q$_1$ is idle, then $j_1=0$ and if it is serving a customer in Q$_1$, then $j_1=1$; if the server of Q$_2$ is idle, then $j_2=0$ and if it is serving a customer in Q$_2$, then $j_2=2$; if the additional server is idle, then $j_3=0$, if it is serving a customer in Q$_1$, then $j_3=1$ and if it is serving a customer in Q$_2$, then $j_3=2$. 
Let $J(t)$ be the server state at time $t$, which is defined as follows: if $L_1(t)=L_2(t)=0$, $J(t)$ takes a value in $S_0=\{1,2,3,4,5,6,7,8\}$ as Table \ref{tab:server_state}; if $L_1(t)>0$ and $L_2(t)=0$, $J(t)$ takes a value in $S_1=\{1,2,3\}$ as Table \ref{tab:server_state}; if $L_1(t)=0$ and $L_2(t)>0$, $J(t)$ takes a value in $S_2=\{1,2,3\}$ as Table \ref{tab:server_state}; if $L_1(t)>0$ and $L_2(t)>0$, $J(t)$ takes a value in $S_+=\{1,2\}$ as Table \ref{tab:server_state}. 
In several states of the servers, a portion of server ability is used ineffectively. For example, when $L_1(t)=0$, $L_2(t)>0$ and $J(t)=3$, there is one customer in Q$_1$ and there is at lest one waiting customer in Q$_2$, but the server of Q$_1$ is idle since the additional server is serving the customer in Q$_1$. This is a reason why the efficiency of the model becomes less than one. 
The process $\{\bY(t)\}=\{((L_1(t),L_2(t)),J(t))\}$ is a 2d-QBD process and it is governed by the infinitesimal generator $Q$ composed of the following block matrices:
\begin{align*}
&A^{(+)}_{-1,0} = \begin{pmatrix}
2\mu_1 & 0 \cr
0 & \mu_1
\end{pmatrix},\  
A^{(+)}_{0,0} = \begin{pmatrix}
-(\lambda+2\mu_1+\mu_2) & 0  \cr
0 & -(\lambda+\mu_1+2\mu_2) \cr
\end{pmatrix},\\ 
&A^{(+)}_{0,-1} = \begin{pmatrix}
\mu_2 & 0 \cr
\mu_2 & \mu_2
\end{pmatrix},\ 
A^{(+)}_{1,0}=\lambda_1 I,\ 
A^{(+)}_{0,1}=\lambda_2 I,\ 
A^{(+)}_{1,1}=A^{(+)}_{-1,1}=A^{(+)}_{1,-1}=A^{(+)}_{-1,-1}=O, 
\end{align*}
\begin{align*}
&A^{(1)}_{0,0} = \begin{pmatrix}
-(\lambda+2\mu_1) & \lambda_2 & 0 \cr
\mu_2 & -(\lambda+2\mu_1+\mu_2) & 0 \cr
\mu_2 & 0 & -(\lambda+\mu_1+\mu_2)
\end{pmatrix},\ 
A^{(1)}_{0,1} = \begin{pmatrix}
0 & 0 \cr
\lambda_2 & 0 \cr
0 & \lambda_2 
\end{pmatrix},\\ 
&A^{(1)}_{-1,0} = \begin{pmatrix}
2\mu_1 & 0 & 0 \cr
0 & 2\mu_1 & 0\cr
0 & 0 & \mu_1 
\end{pmatrix},\ 
A^{(1)}_{0,-1} = \begin{pmatrix}
0 & \mu_2 & 0 \cr
0 & \mu_2 & \mu_2
\end{pmatrix},\\
&A^{(1)}_{1,0}=\lambda_1 I,\ 
A^{(1)}_{1,1}=A^{(1)}_{-1,1}=O,\ A^{(1)}_{1,-1}=A^{(1)}_{-1,-1}=O, 
\end{align*}
\begin{align*}
&A^{(2)}_{0,0} = \begin{pmatrix}
-(\lambda+2\mu_1) & \lambda_1 & 0 \cr
\mu_1 & -(\lambda+\mu_1+2\mu_2) & 0 \cr
\mu_1 & 0 & -(\lambda+\mu_1+\mu_2)
\end{pmatrix},\ 
A^{(2)}_{1,0} = \begin{pmatrix}
0 & 0 \cr
0 & \lambda_1 \cr
\lambda_1 & 0 
\end{pmatrix},\\ 
&A^{(2)}_{0,-1} = \begin{pmatrix}
2\mu_2 & 0 & 0 \cr
0 & 2\mu_2 & 0\cr
0 & 0 & \mu_2 
\end{pmatrix},\ 
A^{(2)}_{-1,0} = \begin{pmatrix}
0 & \mu_1 & \mu_1 \cr
0 & \mu_1 & 0
\end{pmatrix},\\
&A^{(2)}_{1,0}=\lambda_2 I,\ 
A^{(2)}_{1,1}=A^{(2)}_{-1,1}=O,\ A^{(2)}_{1,-1}=A^{(2)}_{-1,-1}=O, 
\end{align*}
where $\lambda=\lambda_1+\lambda_2$; we omit the description of $A^{(0)}_{ij}$ for $i,j\in\mathbb{H}$ since they are not used for evaluating the value of the efficiency of the model. 
%
%

The infinitesimal generator of $\calL^{(+)}$ is given by
\begin{align*}
&A^{(+)}_{*,*} = \begin{pmatrix}
0 & 0 \cr
\mu_2 & -\mu_2 \cr
\end{pmatrix}. 
\end{align*}
Hence, $\calL^{(+)}$ is reducible and has just one irreducible class, which is $\{1\}$. The stationary distribution of $\calL^{(+)}$ is given by $\bpi^{(+)}_{*,*}=\begin{pmatrix} 1 & 0 \end{pmatrix}$ and the mean transition rate vector $\ba^{(+)}=(a^{(+)}_1,a^{(+)}_2)$ is given as $a^{(+)}_1=\lambda_1-2\mu_1$ and $a^{(+)}_2=\lambda_2-\mu_2$. 
The nonzero block matrices of the infinitesimal generator of $\calL^{(1)}$ are given by
\begin{align*}
&A^{(+)}_{*,-1}=A^{(+)}_{0,-1},\ 
A^{(+)}_{*,0} = \begin{pmatrix}
-(\lambda_2+\mu_2) & 0 \cr
0 & -(\lambda_2+2\mu_2) \cr
\end{pmatrix}, \ 
A^{(+)}_{*,1}=\lambda_2 I,\\ 
&A^{(1)}_{*,-1}=A^{(1)}_{0,-1},\ 
A^{(1)}_{*,0} = \begin{pmatrix}
-\lambda_2 & \lambda_2 & 0 \cr
\mu_2 & -(\lambda_2+\mu_2) & 0 \cr
\mu_2 & 0 & -(\lambda_2+\mu_2)
\end{pmatrix},\ 
A^{(1)}_{*,1}=A^{(1)}_{0,1}.
\end{align*}
From the structure of these block matrices, we see that $\calL^{(1)}$ is reducible and has just one irreducible class, which is infinite. 
On the other hand, the nonzero block matrices of the infinitesimal generator of $\calL^{(2)}$ are given by
\begin{align*}
&A^{(+)}_{-1,*}=A^{(+)}_{-1,0},\ 
A^{(+)}_{0,*} = \begin{pmatrix}
-(\lambda_1+2\mu_1) & 0 \cr
\mu_2 & -(\lambda_1+\mu_1+\mu_2) \cr
\end{pmatrix}, \ 
A^{(+)}_{1,*}=\lambda_1 I,\\ 
&A^{(2)}_{-1,*}=A^{(2)}_{-1,0},\ 
A^{(2)}_{0,*} = \begin{pmatrix}
-\lambda_1 & \lambda_1 & 0 \cr
\mu_1 & -(\lambda_1+\mu_1) & 0 \cr
\mu_1 & 0 & -(\lambda_1+\mu_1)
\end{pmatrix},\ 
A^{(2)}_{1,*}=A^{(2)}_{1,0}.
\end{align*}
From the structure of these block matrices, we see that $\calL^{(2)}$ is irreducible. 
By Theorem \ref{th:stability_cond1}, if $a^{(+)}_1<0$, $a^{(+)}_2\ge 0$ and $a^{(2)}_2<0$, $\{\bY(t)\}$ is positive recurrent; if $a^{(+)}_1<0$, $a^{(+)}_2\ge 0$ and $a^{(2)}_2>0$, it is transient. 
When $\mu_1=\mu_2=1$ and $a^{(+)}_1=\lambda_1-2\mu_1<0$, the value of $\lambda_2$ that makes $a^{(2)}$ equal $0$, denoted by $\lambda_2^*$, is given for each value of $\lambda_1$ in Table \ref{tab:table2}, where the efficiency of the model is given by $\rho^*=(\lambda_1/\mu_1+\lambda_2^*/\mu_2)/3$ and the maximum throughput vector by $(\lambda_1,\lambda_2^*)$. 
From the table, we can see how the additional server relieves congestion of Q$_2$ depending on the value of $\lambda_1$. In this case, the efficiency of the model is improved as the value of $\lambda_1$ increases.

\begin{table}[htp]
\caption{The values of $\lambda_2$ that make $a^{(2)}$ equal $0$.}
\begin{center}
\begin{tabular}{c|ccccccccc}
$\lambda_1$ & 1.1 & 1.2 & 1.3 & 1.4 & 1.5 & 1.6 & 1.7 & 1.8 & 1.9 \cr \hline
$\lambda_2^*$ & 1.610 & 1.550 & 1.488 & 1.424 & 1.357 & 1.289 & 1.219 & 1.147 & 1.074 \cr
$\rho^*$ & 0.903 & 0.917 & 0.929 & 0.941 & 0.952 & 0.963 & 0.973 & 0.982 & 0.991
\end{tabular}
\end{center}
\label{tab:table2}
\end{table}%

%
%
\section{Proof of the main theorem} \label{sec:proof} 

%
%
\subsection{Discrete-time 2d-QBD process} \label{sec:dt2dQBD}

In order to prove Theorem \ref{th:stability_cond1}, we use a method developed for analyzing stability of 2d-RRWs \cite{Fayolle89,Fayolle95}. 
Consider the 2d-QBD process $\{\bY(t)\}=\{((L_1(t),L_2(t)),J(t))\}$ defined in Section \ref{sec:model}. Setting the uniformization parameter $\nu<\infty$ so that it satisfies $-q_{((l_1,l_2),j),((l_1,l_2),j)}\le \nu$  for every $((l_1,l_2),j)\in\calS$, we obtain a discrete-time 2d-QBD process from $\{\bY(t)\}$ by  uniformization. We denote the discrete-time 2d-QBD process by $\{\bY_n\}=\{((L_{1,n},L_{2,n}),J_n)\}$. 
$\{\bY_n\}$ is a (discrete-time) Markov chain on the state space $\calS$ whose transition probability matrix $P$ is given in block form as
\[
P=\left( P_{(l_1,l_2),(l'_1,l_2')}; (l_1,l_2),(l_1',l_2')\in\mathbb{Z}_+^2 \right),
\]
where $P_{(l_1,l_2),(l_1',l_2')} = \delta_{l_1,l_1'} \delta_{l_2,l_2'} I + Q_{(l_1,l_2),(l_1',l_2')}/\nu$ and $\delta_{l,l'}$ is the Kronecker delta.  The block matrices of $P$ are, therefore, given in terms of $\bar{A}^{(i)}_{k_1,k_2},\,i\in\{0,1,2,+\},\,k_1,k_2\in\mathbb{H}$, like $Q$, where $\bar{A}^{(i)}_{k_1,k_2} = \delta_{k_1,0} \delta_{k_2,0} I + A^{(i)}_{k_1,k_2}/\nu$ (see expression (\ref{eq:Q_blocks})). 
For $k_1,k_2\in\mathbb{H}$, define $\bar{A}^{(+)}_{*,*}$, $\bar{A}^{(+)}_{*,k_2}$, $\bar{A}^{(+)}_{k_1,*}$, $\bar{A}^{(1)}_{*,k_2}$ and $\bar{A}^{(2)}_{k_1,*}$ in a manner similar to that used for defining $A^{(+)}_{*,*}$, $A^{(+)}_{*,k_2}$, $A^{(+)}_{k_1,*}$, $A^{(1)}_{*,k_2}$ and $A^{(2)}_{k_1,*}$. 
Under Assumption \ref{as:Yt_irreducible}, the Markov chain $\{\bY_n\}$ is irreducible.

Analogously to the case of $\{\bY(t)\}$, we define (discrete-time) Markov chains $\{\bY^{(+)}_n\}$, $\{\bY^{(1)}_n\}$ and $\{\bY^{(2)}_n\}$ corresponding to $\{\bY^{(+)}(t)\}$, $\{\bY^{(1)}(t)\}$ and $\{\bY^{(2)}(t)\}$, respectively. 
For $i\in\{1,2,+\}$, the state space of $\{\bY^{(i)}_n\}=\{((L^{(i)}_{1,n},L^{(i)}_{2,n}),J^{(i)}_n)\}$ is given by $\calS^{(i)}$ and its transition probability matrix $P^{(i)}$ is given in block form as
\[
P^{(i)}=\left( P^{(i)}_{(l_1,l_2),(l'_1,l_2')}; (l_1,l_2),(l_1',l_2')\in\calS^* \right),
\]
where $P^{(i)}_{(l_1,l_2),(l_1',l_2')} = \delta_{l_1,l_1'} \delta_{l_2,l_2'} I + Q^{(i)}_{(l_1,l_2),(l_1',l_2')}/\nu$;  $\calS^*$ is $\mathbb{Z}^2$ if $i=``+"$; it is $\mathbb{Z}\times\mathbb{Z}_+$ if $i=1$; it is $\mathbb{Z}_+\times\mathbb{Z}$ if $i=2$. 
For $\{\bY_n\}$, there are also three induced Markov chains: $\bar{\calL}^{(+)}$, $\bar{\calL}^{(1)}$ and $\bar{\calL}^{(2)}$. $\bar{\calL}^{(+)}$ is given as $\bar{\calL}^{(+)} = \{J^{(+)}_n\}$ and its state space is given by $S_+$. 
$\bar{\calL}^{(1)}$ and $\bar{\calL}^{(2)}$ are given as $\bar{\calL}^{(1)} = \{(L^{(1)}_{2,n},J^{(1)}_n)\}$ and $\bar{\calL}^{(2)} = \{(L^{(2)}_{1,n},J^{(2)}_n)\}$, respectively, and their state spaces are given by  $(\{0\}\times S_1)\cup(\mathbb{N}\times S_+)$ and $(\{0\}\times S_2)\cup(\mathbb{N}\times S_+)$, respectively. $\bar{\calL}^{(1)}$ and $\bar{\calL}^{(2)}$ are ordinary discrete-time QBD processes. 
Under Assumption \ref{as:calL_irreducible}, $\bar{\calL}^{(+)}$ has just one irreducible class and, under Assumption \ref{as:calL12_irreducible}, $\bar{\calL}^{(1)}$ and  $\bar{\calL}^{(2)}$ have at most one irreducible class. 

For $i\in\{1,2,+\}$, $\bar{\calL}^{(i)}$ is a Markov chain obtained from the CTMC $\calL^{(i)}$ by uniformization and the stationary distribution of $\bar{\calL}^{(i)}$ is identical to that of $\calL^{(i)}$. 
Hence, the mean increment vector with respect to $\bar{\calL}^{(+)}$, $\bar{\ba}^{(+)}=(\bar{a}^{(+)}_1,\bar{a}^{(+)}_2)$, is given as 
\begin{align*}
&\bar{a}^{(+)}_1 = \bpi^{(+)}_{*,*} (-\bar{A}^{(+)}_{-1,*}+\bar{A}^{(+)}_{1,*}) \bone = a^{(+)}_1/\nu,\quad 
\bar{a}^{(+)}_2 = \bpi^{(+)}_{*,*} (-\bar{A}^{(+)}_{*,-1}+\bar{A}^{(+)}_{*,1}) \bone = a^{(+)}_2/\nu. 
\end{align*}
Analogously, for $i\in\{1,2\}$, if $\bar{a}^{(+)}_{3-i}<0$, the induced Markov chain $\bar{\calL}^{(i)}$ is positive recurrent and the mean increment vector with respect to $\bar{\calL}^{(i)}$, $\bar{\ba}^{(i)}=(\bar{a}^{(i)}_1,\bar{a}^{(i)}_2)$,  is given as $\bar{\ba}^{(i)}=\ba^{(i)}/\nu$.
For $i\in\{1,2,+\}$, $\bar{\ba}^{(i)}$ is the mean increment vector of the level process of $\{\bY^{(i)}_n\}=\{(\bL^{(i)}_n,J^{(i)}_n)\}$, where $\bL^{(i)}_n=(L^{(i)}_{1,n},L^{(i)}_{2,n})$, and it satisfies, for any $\by\in\calS^{(i)}$,
\begin{align}
& \bar{\ba}^{(i)} 
= \lim_{k\to\infty} \frac{1}{k} \sum_{n=1}^k \mathbb{E}(\bL^{(i)}_n-\bL^{(i)}_{n-1}\,|\,\bY^{(i)}_0=\by)
= \lim_{k\to\infty} \frac{1}{k} \mathbb{E}(\bL^{(i)}_k-\bL^{(i)}_0\,|\,\bY^{(i)}_0=\by). 
\label{eq:a1i_limit}
\end{align}
We use this fact after. 
Since $\{\bY_n\}$ is a Markov chain obtained from the CTMC $\{\bY(t)\}$ by uniformization, $\{\bY(t)\}$ is positive recurrent (resp.\ transient) if and only if $\{\bY_n\}$ is positive recurrent (resp.\ transient). Hence, in order to prove Theorem \ref{th:stability_cond1}, it suffices to prove the following corollary. 
\begin{corollary} \label{co:stability_cond2}
\begin{itemize}
\item[(i)] In the case where $\bar{a}^{(+)}_1<0$ and $\bar{a}^{(+)}_2<0$, the discrete-time 2d-QBD process $\{\bY_n\}$ is positive recurrent if $\bar{a}^{(1)}_1<0$ and $\bar{a}^{(2)}_2<0$, and it is transient if either $\bar{a}^{(1)}_1>0$ or $\bar{a}^{(2)}_2>0$. 
\item[(ii)] In the case where $\bar{a}^{(+)}_1\ge 0$ and $\bar{a}^{(+)}_2<0$, $\{\bY_n\}$ is positive recurrent if $\bar{a}^{(1)}_1<0$, and it is transient if $\bar{a}^{(1)}_1>0$. 
\item[(iii)] In the case where $\bar{a}^{(+)}_1<0$ and $\bar{a}^{(+)}_2\ge 0$, $\{\bY_n\}$ is positive recurrent if $\bar{a}^{(2)}_2<0$, and it is transient if $\bar{a}^{(2)}_2>0$. 
\item[(iv)] If one of $\bar{a}^{(+)}_1$ and $\bar{a}^{(+)}_2$ is positive and the other is non-negative, then $\{\bY_n\}$ is transient.
\end{itemize}
\end{corollary}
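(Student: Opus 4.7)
The plan is to establish Corollary \ref{co:stability_cond2} by applying Foster's criterion (for positive recurrence) and a companion transience criterion, with a Lyapunov test function tailored to the four qualitatively different regions of $\calS$: the interior $\{l_1,l_2\ge N\}$, the strip near the $l_1$-axis $\{l_1\ge N,\,0\le l_2<N\}$, the symmetric strip near the $l_2$-axis, and the corner $\{0\le l_1,l_2<N\}$. The hypotheses on $\bar{a}^{(+)}_1,\bar{a}^{(+)}_2,\bar{a}^{(1)}_1,\bar{a}^{(2)}_2$ will translate directly into negativity of the expected drift of the test function in these respective regions, exploiting the interpretation (\ref{eq:a1i_limit}) of the mean increment vectors as ergodic averages for the induced chains.

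For the positive recurrence claims (i)--(iii), I would take $f(l_1,l_2,j)=\alpha_1 l_1+\alpha_2 l_2+g_j(l_1,l_2)$ with positive weights $\alpha_1,\alpha_2$ and a bounded correction $g_j(l_1,l_2)$ whose form depends on the region. In the interior, $g_j$ is the solution to a Poisson-type equation for $\bar{\calL}^{(+)}$, chosen so that $\mathbb{E}[f(\bY_{n+1})-f(\bY_n)\mid\bY_n=\by]=\alpha_1\bar{a}^{(+)}_1+\alpha_2\bar{a}^{(+)}_2+o(1)$ uniformly in $j$. Near the $l_1$-axis, $g_j$ is instead built from the stationary measure $\bpi^{(1)}_*$ of the QBD induced chain $\bar{\calL}^{(1)}$ via a Poisson equation for $\bar{\calL}^{(1)}$, yielding expected drift $\alpha_1\bar{a}^{(1)}_1+o(1)$ (using $\bar{a}^{(1)}_2=0$); near the $l_2$-axis, symmetrically, $\alpha_2\bar{a}^{(2)}_2+o(1)$. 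In case (i), $\alpha_1=\alpha_2=1$ makes all three drifts negative. In case (ii), one picks $\alpha_1=1$ and $\alpha_2$ large enough so that $\alpha_1\bar{a}^{(+)}_1+\alpha_2\bar{a}^{(+)}_2<0$ despite $\bar{a}^{(+)}_1\ge 0$; the $l_2$-axis strip is a transient boundary for the chain (since $\bar{\calL}^{(2)}$ is not positive recurrent), so the test function is shaped to grow faster off that strip and only needs a bounded positive drift there. Case (iii) is symmetric. After matching the corrections across region boundaries and verifying the standard Foster hypotheses (nonnegativity, $f\to\infty$, bounded upward jumps), Foster's criterion gives positive recurrence.

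For the transience assertions in (i)--(iii), I would use a Foster--Mertens--Tweedie criterion: exhibit a bounded nonnegative $\varphi$ and a set $C$ such that $\varphi$ is submartingale outside $C$ and $\varphi$ attains values strictly below $\sup\varphi$ with positive probability. When for instance $\bar{a}^{(1)}_1>0$, I would couple $\{\bY_n\}$ with $\{\bY^{(1)}_n\}$ as long as $L_{2,n}\ge 1$, and invoke the ergodic identity (\ref{eq:a1i_limit}) to conclude $L^{(1)}_{1,n}/n\to\bar{a}^{(1)}_1>0$ almost surely, so that from initial states with $L_1$ and $L_2$ both sufficiently large, the $l_1$-coordinate escapes to $+\infty$ without ever hitting the $l_2$-axis with probability bounded away from zero; this furnishes a transience test function $\varphi$ depending only on $l_1$. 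For case (iv), since the mean drift of the level process of $\{\bY^{(+)}_n\}$ has a strictly positive component and a nonnegative component, a direct strong-law argument applied to the coupling of $\{\bY_n\}$ with $\{\bY^{(+)}_n\}$ (valid while both coordinates are positive) shows the chain fails to return to any compact set with positive probability.

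The main obstacle will be the construction and drift analysis of $g_j(l_1,l_2)$ in the boundary strips: the Poisson equations for $\bar{\calL}^{(1)}$ and $\bar{\calL}^{(2)}$ live on countably infinite state spaces, so their solutions must be controlled uniformly in the unbounded coordinate and must patch consistently with the interior correction. Showing that these solutions grow at most sublinearly in $l_2$ (respectively $l_1$) is precisely where the geometric decay of the rate matrices $R^{(1)},R^{(2)}$ and the single-irreducible-class hypothesis in Assumption \ref{as:calL12_irreducible} enter. A secondary delicate point is verifying, in case (ii), that the drift near the transient $l_2$-axis is controlled without an ergodic average to rely on; this is handled by making the test function grow fast enough in $l_1$ that any bounded excursion of the chain into that strip is dominated by the interior drift, mirroring the argument in \cite{Fayolle89,Fayolle95} for 2d-RRWs.
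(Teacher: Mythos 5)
Your route is genuinely different from the paper's: you propose a one-step Foster argument with a linear function plus Poisson-equation (harmonic) corrections $g_j$, whereas the paper never corrects the test function at all --- it keeps $f(\by)=\langle\bl,\bw\rangle$ purely linear and instead changes the \emph{time scale}, observing the chain along an embedded process $\hat{\bY}_n=\bY_{\sigma_n}$ whose step length $u_i$ depends on the region $\calV_i$, and then showing (Propositions \ref{pr:approximation_g} and \ref{pr:approximation_g12}) that the $u_i$-step averaged increments are uniformly close to $\bar{\ba}^{(i)}$ or $\bar{\ba}^{(+)}$. The critical gap in your version is the boundary strip whose induced chain is \emph{not} positive recurrent, i.e.\ $\calV_2$ in case (ii) (and $\calV_1$ in case (iii)). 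Foster's criterion demands drift $\le-\delta$ on all of $\calS\setminus\calS_0$ with $\calS_0$ \emph{finite}; since $\calV_2$ is infinite, your statement that the test function ``only needs a bounded positive drift there'' is simply not admissible, and a bounded correction $g_j$ cannot repair it, because the one-step mean increment in that strip is dictated by the $A^{(2)}$ blocks and may point persistently outward (e.g.\ upward in $l_2$) uniformly along the strip. The paper's resolution is exactly Proposition \ref{pr:approximation_g12}(ii): because $\bar{\calL}^{(2)}$ is transient or null recurrent when $\bar{a}^{(+)}_1\ge 0$, the chain started in $\calV_2$ spends, over a long horizon $u_2$, a vanishing fraction of time near the face, so its $u_2$-step averaged increment is within $\varepsilon$ of $\bar{\ba}^{(+)}$ itself; the single linear vector $\bw=(-\bar{a}^{(+)}_2,\,1+\bar{a}^{(+)}_1)$ then has negative drift in every region at once. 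Your sketch needs either this multi-step averaging or corrections growing \emph{linearly} (not boundedly, and not merely sublinearly) along the strip; the ``grow fast enough in $l_1$'' heuristic does not supply it, since excursions into the strip are unbounded. Relatedly, even in the positive recurrent strips, solutions of Poisson equations for QBDs are generally unbounded (the bias function grows with the level), so your claimed bounded $g_j$ already needs the sublinearity analysis you flag --- a difficulty the paper avoids entirely.

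The same omission undercuts your case (iv): the coupling of $\{\bY_n\}$ with $\{\bY^{(+)}_n\}$ is only valid while both coordinates stay positive, but when, say, $\bar{a}^{(+)}_1>0$ and $\bar{a}^{(+)}_2=0$, the second coordinate has no drift away from the face and the chain re-enters the strip $\calV_1$ infinitely often, where the comparison with $\bY^{(+)}$ is void; transience there again rests on the averaged-increment estimate of Proposition \ref{pr:approximation_g12}(ii) (applicable since $\bar{a}^{(+)}_2\ge 0$), after which the paper concludes via Proposition \ref{pr:Markov_unstable2} with the linear $f$ and $\bw=(1,1)$, the skip-free property furnishing the bounded-jump condition. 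Two smaller slips: the coupling you invoke for $\bar{a}^{(1)}_1>0$ is with $\{\bY^{(1)}_n\}$ and is valid while $L_{1,n}\ge 1$ (it is the $l_1$-boundary that $\bY^{(1)}$ lacks; the $l_2=0$ face is retained in $\calS^{(1)}$), not while $L_{2,n}\ge 1$ as you wrote; and your strong-law step $L^{(1)}_{1,n}/n\to\bar{a}^{(1)}_1$ needs an ergodic theorem for the additive functional of the QBD $\bar{\calL}^{(1)}$, which holds but should be argued (the paper only needs the Ces\`aro limit (\ref{eq:a1i_limit}), which is weaker).
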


%
%
\subsection{Embedded Markov chain} \label{sec:embeddedMC} 

We consider a kind of embedded Markov chain for the discrete-time 2d-QBD process $\{\bY_n\}$.
Let $u_1$, $u_2$ and $u_+$ be positive integers. Let $K_+$, $K_1$ and $K_2$ be positive integers satisfying $K_1>K_+\ge 2$, $K_2>K_+$ and $K_i>u_i$ for $i\in\{1,2,+\}$. Divide the state space $\calS$ into exclusive subsets $\calV_+$, $\calV_1$, $\calV_2$ and $\calV_0$, defined as 
\begin{align*}
&\calV_+ = \{((l_1,l_2),j)\in\calS: l_1\ge K_+,\ l_2\ge K_+\}, \quad 
\calV_1 = \{((l_1,l_2),j)\in\calS: l_1\ge K_1,\ l_2<K_+\}, \\
&\calV_2 = \{((l_1,l_2),j)\in\calS: l_1< K_+,\ l_2\ge K_2\}, \quad 
\calV_0 = \{((l_1,l_2),j)\in\calS: l_1< K_1,\ l_2<K_2\}\setminus\calV_+
\end{align*}
(see Fig.\ \ref{fig:calS_partition}). Define a function $u$ on $\calS$ as 
\[
u(\by) = \left\{ \begin{array}{ll}
u_i & \mbox{if $\by\in\calV_i$ for some $i\in\{1,2,+\}$}, \cr
1 & \mbox{otherwise},
\end{array} \right.
\]
a random sequence $\{\sigma_n\}$ as 
\[
\sigma_0=0,\quad \sigma_{n+1}=\sigma_n+u(\bY_{\sigma_n}),\,n\ge 0,
\]
and a Markov chain $\{\hat{\bY}_n\}=\{((\hat{L}_{1,n},\hat{L}_{2,n}),\hat{J}_n)\}$ as $\hat{\bY}_n = \bY_{\sigma_n},\,n\ge 0$. The process $\{\hat{\bY}_n\}$ is an embedded Markov chain of $\{\bY_n\}$.
\begin{remark} \label{re:embeddedMC} 
If $\hat{\bY}_k=\bY_{\sigma_k}=((L_{1,\sigma_k},L_{2,\sigma_k}),J_{\sigma_k})\in\calV_+$, we have $u(\bY_{\sigma_k})=u_+<K_+$ and $L_{m,\sigma_k}\ge K_+$ for $m\in\{1,2\}$.
Since the level process $\{(L_{1,n},L_{2,n})\}$ is skip free, $\{\bY_n\}=\{((L_{1,n},L_{2,n}),J_n)\}$ does not touch the boundaries of the state space $\calS$, during the time interval $[\sigma_k,\sigma_{k+1}]$. Therefore, in a stochastic sense, $\{\bY_n\}$ behaves just like the Markov chain $\{\bY^{(+)}_n\}$ during that time interval. 
Analogously, for $i\in\{1,2\}$, if $\hat{\bY}_k=\bY_{\sigma_k}\in\calV_i$, then $\{\bY_n\}$ behaves just like the Markov chain $\{\bY^{(i)}_n\}$ during the time interval $[\sigma_k,\sigma_{k+1}]$. 
\end{remark}

\begin{figure}[htbp]
\begin{center}
\setlength{\unitlength}{0.6mm}
\begin{picture}(80,70)(0,0)
\thicklines
\put(0,10){\vector(1,0){70}}
\put(10,0){\vector(0,1){70}}
\put(70,4){\makebox(0,0){\normalsize $l_1$}}
\put(4,68){\makebox(0,0){\normalsize $l_2$}}
\thinlines
\put(30,30){\line(1,0){40}}
\put(30,30){\line(0,1){40}}
\multiput(10,30)(2,0){10}{\line(1,0){1}}
\multiput(30,10)(0,2){10}{\line(0,1){1}}
\put(2,30){\makebox(0,0){\normalsize $K_+$}}
\put(30,5){\makebox(0,0){\normalsize $K_+$}}
\put(50,50){\makebox(0,0){\normalsize $\calV_+$}}
\put(10,40){\line(1,0){20}}
\put(50,10){\line(0,1){20}}
\put(2,40){\makebox(0,0){\normalsize $K_2$}}
\put(50,5){\makebox(0,0){\normalsize $K_1$}}
\put(20,50){\makebox(0,0){\normalsize $\calV_2$}}
\put(60,20){\makebox(0,0){\normalsize $\calV_1$}}
\put(20,20){\makebox(0,0){\normalsize $\calV_0$}}
\end{picture}
\caption{Partition of the state space $\calS$.}
\label{fig:calS_partition}
\end{center}
\end{figure}
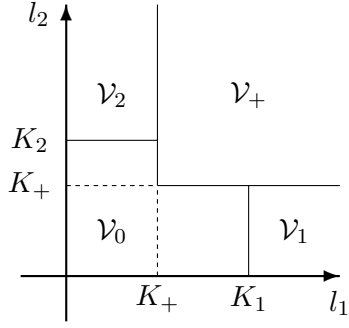

%
To prove Corollary \ref{co:stability_cond2}, we will use the following proposition, which is a modification of Theorem 2.2.4 of \cite{Fayolle95} (also see Theorem 1.4 of \cite{Malyshev81} and Proposition 4.5 of \cite{Bramson08}).

\begin{proposition} \label{pr:Foster2} 
The discrete-time 2d-QBD process $\{\bY_n\}$ is positive recurrent if there exist parameter sets $\{K_+,K_1,K_2\}$ and $\{u_+,u_1,u_2\}$, a positive number $\delta$, a finite subset $\calS_0\subset\calS$ and a lower bounded real function $f$ on $\calS$ such that
\begin{itemize}
\item[(i)] $\mathbb{E}(f(\hat{\bY}_1)-f(\hat{\bY}_0)\,|\,\hat{\bY}_0=\by) \le -\delta$ \ for every $\by\in\calS\setminus\calS_0$, and 
\item[(ii)] $\mathbb{E}(f(\hat{\bY}_1)\,|\,\hat{\bY}_0=\by) < \infty$ \ for every $\by\in\calS_0$. 
\end{itemize}
\end{proposition}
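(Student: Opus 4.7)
The argument splits cleanly into two stages: first, I will apply a Foster drift argument directly to the embedded chain $\{\hat{\bY}_n\}$ to show it hits $\calS_0$ in finite expected time from every state; second, I will lift this back to $\{\bY_n\}$ using the deterministic bound on the embedding gaps $\sigma_{n+1}-\sigma_n$, and finally conclude positive recurrence of $\{\bY_n\}$ via finiteness of $\calS_0$ together with Assumption~\ref{as:Yt_irreducible}.

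For the first stage, after shifting $f$ by a constant so that $f\ge 0$ (conditions (i) and (ii) are preserved), I set $\hat{\tau}_{\calS_0}:=\inf\{n\ge 1: \hat{\bY}_n\in\calS_0\}$ and consider the process $M_n:= f(\hat{\bY}_{n\wedge \hat{\tau}_{\calS_0}}) + \delta\,(n\wedge \hat{\tau}_{\calS_0})$. Condition (i) makes $M_n$ a nonnegative supermartingale; optional stopping and monotone convergence then give $\mathbb{E}[\hat{\tau}_{\calS_0}\mid \hat{\bY}_0=\by]\le f(\by)/\delta$ for every $\by\in \calS\setminus\calS_0$, and condition (ii) handles $\by\in \calS_0$ via one additional step. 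This is essentially the classical proof of Foster's criterion (cf.\ Theorem~2.2.3 of \cite{Fayolle95}), and I emphasize that it does \emph{not} require irreducibility of $\{\hat{\bY}_n\}$, which would not be automatic from irreducibility of $\{\bY_n\}$ since the embedding only samples $\{\bY_n\}$ at certain random times.

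For the second stage I exploit that $\sigma_{n+1}-\sigma_n = u(\hat{\bY}_n)\le u_{\max}:= \max\{1,u_+,u_1,u_2\}$ holds pathwise. Setting $\tau_{\calS_0}:=\inf\{n\ge 1: \bY_n\in\calS_0\}$, this forces $\tau_{\calS_0}\le \sigma_{\hat{\tau}_{\calS_0}}\le u_{\max}\,\hat{\tau}_{\calS_0}$, so $\mathbb{E}[\tau_{\calS_0}\mid \bY_0=\by]<\infty$ for every $\by\in\calS$ by Stage~1, and $M:=\max_{\by\in\calS_0}\mathbb{E}[\tau_{\calS_0}\mid \bY_0=\by]<\infty$ because $\calS_0$ is finite. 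Under Assumption~\ref{as:Yt_irreducible}, the trace chain $\{Z_k\}$ of $\{\bY_n\}$ on the finite set $\calS_0$ (obtained by sampling $\{\bY_n\}$ at successive visits to $\calS_0$) is irreducible and hence positive recurrent, so for any $\by_0\in\calS_0$ the strong Markov property gives $\mathbb{E}[\tau_{\by_0}\mid \bY_0=\by_0]\le M\cdot\mathbb{E}[\tau^Z_{\by_0}\mid Z_0=\by_0]<\infty$, which is positive recurrence of $\{\bY_n\}$. The only step requiring real care is the pathwise lifting in Stage~2: both the deterministic character of the bound $u\le u_{\max}$ and the transfer of irreducibility from $\{\bY_n\}$ to the trace chain $\{Z_k\}$ must be verified explicitly, though both reductions are elementary; the substantive content of the proof sits inside the classical supermartingale argument of Stage~1.
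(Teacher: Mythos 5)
Your proof is correct and follows essentially the same route as the paper, which does not prove Proposition \ref{pr:Foster2} itself but invokes it as a modification of Theorem 2.2.4 of \cite{Fayolle95}: your two-stage argument — the nonnegative supermartingale $f(\hat{\bY}_{n\wedge\hat{\tau}_{\calS_0}})+\delta\,(n\wedge\hat{\tau}_{\calS_0})$ yielding $\mathbb{E}[\hat{\tau}_{\calS_0}\,|\,\hat{\bY}_0=\by]\le f(\by)/\delta$, followed by the pathwise lifting via $\sigma_{n+1}-\sigma_n=u(\hat{\bY}_n)\le u_{\max}$ and the trace chain on the finite set $\calS_0$ — is precisely the classical proof of that cited criterion. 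The two points you flag for care are indeed sound: the Foster stage needs no irreducibility of the embedded chain, and irreducibility of $\{\bY_n\}$ (which the paper guarantees under Assumption \ref{as:Yt_irreducible}) passes to the trace chain since any positive-probability path of $\{\bY_n\}$ between states of $\calS_0$ induces a corresponding trace path.
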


In the case where $u_i$ is set at $1$ for every $i\in\{1,2,+\}$, this proposition is called {\it Foster's criterion}. 
We will also use the following proposition, which is a modification of Theorem 2.2.7 of \cite{Fayolle95} (also see Theorem 1.6 of \cite{Malyshev81}). 
\begin{proposition} \label{pr:Markov_unstable2} 
The discrete-time 2d-QBD process $\{\bY_n\}$ is transient if there exist parameter sets $\{K_+,K_1,K_2\}$ and $\{u_+,u_1,u_2\}$, a real function $f$ on $\calS$ and positive numbers $\delta$, $c$ and $b$ such that, for $\calA=\{\by\in\calS : f(\by)>c \}$,   
\begin{itemize}
\item[(i)] $\calA\ne\emptyset$, 
\item[(ii)] $\mathbb{E}(f(\hat{\bY}_1)-f(\hat{\bY}_0)\,|\,\hat{\bY}_0=\by) \ge \delta$ \ for every $\by\in\calA$, and
\item[(iii)] the inequality $|f(\by')-f(\by)|>b$ implies $\mathbb{P}(\bY_1=\by'\,|\,\bY_0=\by)=0$. 
\end{itemize}
\end{proposition}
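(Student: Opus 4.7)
The plan is to apply a classical Foster-type transience criterion to the embedded Markov chain $\{\hat{\bY}_n\}$, then transfer the conclusion back to $\{\bY_n\}$. This parallels the proof of Theorem 2.2.7 in \cite{Fayolle95}; the only genuine modification is that condition (iii) here controls jumps of $f$ along $\{\bY_n\}$ rather than along $\{\hat{\bY}_n\}$, so I first have to convert it.

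Set $U=\max\{u_+,u_1,u_2,1\}$, which is finite. Because $\sigma_{n+1}-\sigma_n=u(\bY_{\sigma_n})\le U$ and each one-step transition of $\{\bY_n\}$ changes $f$ by at most $b$ by (iii), a telescoping argument yields the almost-sure bound $|f(\hat{\bY}_{n+1})-f(\hat{\bY}_n)|\le Ub$. This is the standard bounded-jump hypothesis for the embedded chain. Combined with (i) and (ii), I can now invoke the classical criterion: starting from some $\by_0\in\calA$, form an auxiliary supermartingale of the type $\varphi(f(\hat{\bY}_n))$ with $\varphi$ a suitable bounded function (e.g.\ $\varphi(x)=e^{-\alpha x}$ for small $\alpha>0$), use the bounded-jump estimate and the drift inequality (ii) to verify the supermartingale property on $\calA$, and apply optional stopping at $\tau=\inf\{n:\hat{\bY}_n\notin\calA\}$ to conclude that $\mathbb{P}(\tau=\infty\,|\,\hat{\bY}_0=\by_0)>0$. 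Since $\calA=\{f>c\}$ and jumps of $f$ are bounded, this positive-probability event forces $f(\hat{\bY}_n)\to\infty$ along such a trajectory, giving a positive escape probability and hence transience on the relevant communicating class of $\{\hat{\bY}_n\}$.

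To lift transience from $\{\hat{\bY}_n\}$ to $\{\bY_n\}$, define $k(n)$ by $\sigma_{k(n)}\le n<\sigma_{k(n)+1}$. Since $\sigma$-increments are bounded by $U$, one has $k(n)\to\infty$ as $n\to\infty$; and since the level process is skip free, the levels of $\bY_n$ differ from those of $\hat{\bY}_{k(n)}$ by at most $U$ in each coordinate. Consequently, if $\{\hat{\bY}_n\}$ leaves every finite set almost surely, so does $\{\bY_n\}$, which, being irreducible by Assumption \ref{as:Yt_irreducible}, must therefore be transient. The main obstacle I anticipate is a careful handling of the embedded chain: verifying that $\{\hat{\bY}_n\}$ is genuinely Markov (the $\sigma_n$ are stopping times of $\{\bY_n\}$ whose increments depend only on $\bY_{\sigma_n}$, so this follows from the strong Markov property), and handling the fact that $\{\hat{\bY}_n\}$ need not be irreducible on all of $\calS$—one must select an initial state in $\calA$ whose forward orbit under $\{\hat{\bY}_n\}$ fits the hypotheses, and then use irreducibility of $\{\bY_n\}$ to transfer the conclusion globally.
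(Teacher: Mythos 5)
The paper contains no proof of this proposition: it is imported wholesale as a ``modification of Theorem 2.2.7 of \cite{Fayolle95}'' (see also Theorem 1.6 of \cite{Malyshev81}), and the content of the modification is exactly the two reduction steps you carry out --- converting the one-step jump condition (iii) on $\{\bY_n\}$ into the almost-sure bound $|f(\hat{\bY}_{n+1})-f(\hat{\bY}_n)|\le Ub$ with $U=\max\{u_+,u_1,u_2,1\}$, and transferring the conclusion from the embedded chain back to $\{\bY_n\}$ via skip-freeness of the level process and irreducibility (Assumption \ref{as:Yt_irreducible}). So your proposal is a self-contained proof of what the paper asserts by citation, and its skeleton is sound: $\{\hat{\bY}_n\}$ is Markov by the strong Markov property since $u(\cdot)$ depends only on the current state; the exponential function $e^{-\alpha f}$ with $\alpha$ small is a supermartingale up to the exit time $\tau$ of $\calA$ thanks to (ii) and the bound $Ub$; optional stopping gives $\mathbb{P}(\tau=\infty\mid\hat{\bY}_0=\by_0)\ge 1-e^{-\alpha(f(\by_0)-c)}>0$ for any $\by_0\in\calA$. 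Also, as you note, irreducibility of $\{\hat{\bY}_n\}$ is never needed, so your worry about its communicating classes dissolves.

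Two steps as written need repair. First, the claim ``Since $\calA=\{f>c\}$ and jumps of $f$ are bounded, this positive-probability event forces $f(\hat{\bY}_n)\to\infty$'' is a non sequitur: staying above level $c$ with bounded jumps does not force divergence --- the trajectory could oscillate in a bounded band or cycle among finitely many states of $\calA$ forever. You must invoke the drift a second time: on $\{\tau=\infty\}$ the compensated increments $f(\hat{\bY}_{n+1})-f(\hat{\bY}_n)-\mathbb{E}\big(f(\hat{\bY}_{n+1})-f(\hat{\bY}_n)\mid\mathcal{F}_n\big)$ are bounded martingale differences, so by Azuma's inequality or the martingale strong law $f(\hat{\bY}_n)\ge f(\hat{\bY}_0)+n\delta+o(n)\to\infty$ a.s.\ on $\{\tau=\infty\}$ (alternatively, iterate the optional-stopping bound at levels $c'>c$). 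This drift-based argument is also what handles the degenerate case $\calA=\calS$, in which $\tau\equiv\infty$ trivially and your optional-stopping step yields nothing. Since $f$ is real-valued and hence bounded on finite sets, $f(\hat{\bY}_n)\to\infty$ does imply the trajectory leaves every finite subset of $\calS$. Second, ``if $\{\hat{\bY}_n\}$ leaves every finite set almost surely, so does $\{\bY_n\}$'' overstates what you have: escape holds only with positive probability from $\by_0$. That suffices, but phrase it correctly: on the escape event, $\bY_n$ stays within $U$ of $\hat{\bY}_{k(n)}$ in each level coordinate by skip-freeness, so $\mathbb{P}(\bY_n\to\infty\mid\bY_0=\by_0)>0$; hence with positive probability $\by_0$ is visited only finitely often, which is incompatible with recurrence of the irreducible chain $\{\bY_n\}$, so $\{\bY_n\}$ is transient. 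With these two patches --- both standard --- your argument is complete.
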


In the following subsection, we prepare to construct test function $f$ appeared in the above propositions.

%
%
\subsection{Time averaged increment vectors} \label{sec:cond_mean_increment}

For $i\in\{1,2,+\}$, consider the Markov chain $\{\bY^{(i)}_n\}=\{(\bL^{(i)}_n,J^{(i)}_n)\}$ defined in Subsection \ref{sec:dt2dQBD}, and define, for $\by\in\calS^{(i)}$ and $k\ge 1$, the expectation of the time-averaged increment vector of $\{\bY^{(i)}_n\}$, $\bg^{(i)}_{\by}(k)=(g^{(i)}_{1,\by}(k),g^{(i)}_{2,\by}(k))$, as 
\[
\bg^{(i)}_{\by}(k)
= \mathbb{E}\bigg( \frac{1}{k} \sum_{n=1}^{k} (\bL^{(i)}_n-\bL^{(i)}_{n-1})\,\Big|\,\bY^{(i)}_0=\by \bigg).
\]
From equation (\ref{eq:a1i_limit}), we see that, for $i\in\{1,2,+\}$, if the induced Markov chain $\bar{\calL}^{(i)}$ has a unique stationary distribution, $\bg^{(i)}_{\by}(k)$ satisfies 
\begin{equation}
\lim_{k\to\infty} \bg^{(i)}_{\by}(k) = \bar{\ba}^{(i)}. 
\label{eq:limit_gi}
\end{equation}
Under Assumption \ref{as:calL_irreducible}, $\bar{\calL}^{(+)}=\{J^{(+)}_n\}$ has the unique stationary distribution $\bpi_{*,*}$. For any $\by=((l_1,l_2),j)\in\calS^{(+)}$, $\mathbb{E}(\bL^{(+)}_n-\bL^{(+)}_{n-1}\,|\,\bY^{(+)}_0=\by)=\mathbb{E}(\bL^{(+)}_n-\bL^{(+)}_{n-1}\,|\,J^{(+)}_0=j) $ and the state space $S_+$ of $\{J^{(+)}_n\}$ is finite. Hence, we immediately obtain an approximation for $\bg^{(+)}_{\by}(k)$, as follows. 
\begin{proposition} \label{pr:approximation_g}
For any $\varepsilon>0$, there exists a positive integer $u_+^*$ such that if $k\ge u_+^*$, then for every $\by\in\calS^{(+)}$, 
\begin{equation}
\big| g^{(+)}_{m,\by}(k) - \bar{a}^{(+)}_m \big| < \varepsilon\quad \mbox{for $m\in\{1,2\}$}. 
\label{eq:app_g}
\end{equation}
\end{proposition}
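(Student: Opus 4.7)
The plan is to reduce the claim to a standard Ces\`aro-ergodic statement for the finite-state Markov chain $\bar{\calL}^{(+)}=\{J^{(+)}_n\}$, and then use finiteness of $S_+$ to upgrade pointwise convergence to uniform convergence in the initial state.

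First I would observe that because the level process $\{\bL^{(+)}_n\}$ is skip free and its transition law is spatially homogeneous, the conditional expectation of a single increment depends only on the current phase. That is, writing $\by=((l_1,l_2),j)$ and setting
\[
\bm(j') = \mathbb{E}\bigl(\bL^{(+)}_1-\bL^{(+)}_0\,\big|\,J^{(+)}_0=j'\bigr),\qquad j'\in S_+,
\]
we have $\|\bm(j')\|\le\sqrt{2}$ (skip-free) and, by conditioning on $J^{(+)}_{n-1}$ and applying the Markov property,
\[
\mathbb{E}\bigl(\bL^{(+)}_n-\bL^{(+)}_{n-1}\,\big|\,\bY^{(+)}_0=\by\bigr)
=\sum_{j'\in S_+} \bm(j')\,[\bar{P}^{(+)}_{*,*}]^{\,n-1}_{j,j'},
\]
where $\bar{P}^{(+)}_{*,*}=I+A^{(+)}_{*,*}/\nu$ is the one-step transition matrix of $\bar{\calL}^{(+)}$. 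Summing over $n=1,\dots,k$ and dividing by $k$ then gives
\[
\bg^{(+)}_{\by}(k)
=\sum_{j'\in S_+} \bm(j')\,\Bigl(\tfrac{1}{k}\sum_{n=0}^{k-1}[\bar{P}^{(+)}_{*,*}]^{\,n}_{j,j'}\Bigr).
\]

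Next I would invoke the Ces\`aro ergodic theorem for finite-state Markov chains. Under Assumption \ref{as:calL_irreducible}, $\bar{\calL}^{(+)}$ has exactly one irreducible (hence recurrent) class, and every transient state is absorbed into it in finite mean time. Consequently, for every $j\in S_+$ and every $j'\in S_+$,
\[
\tfrac{1}{k}\sum_{n=0}^{k-1}[\bar{P}^{(+)}_{*,*}]^{\,n}_{j,j'} \longrightarrow \pi^{(+)}_{*,*}(j') \quad\text{as }k\to\infty,
\]
which yields $\bg^{(+)}_{\by}(k)\to\sum_{j'}\bm(j')\pi^{(+)}_{*,*}(j')=\bar{\ba}^{(+)}$ in agreement with \eqref{eq:limit_gi}. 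Since $\bg^{(+)}_{\by}(k)$ depends on $\by$ only through its phase component $j$ and $S_+$ is finite, the convergence is uniform in $j$: given $\varepsilon>0$, for each $j\in S_+$ there exists $u_+^*(j)$ beyond which both components differ from $\bar{a}^{(+)}_m$ by less than $\varepsilon$, and we simply take $u_+^*=\max_{j\in S_+}u_+^*(j)$, which is finite.

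The proof is essentially routine; the only point requiring a moment of care is the Ces\`aro convergence from transient initial phases, which follows because absorption into the unique irreducible class occurs in finite mean time and the post-absorption chain is itself Ces\`aro-ergodic toward $\bpi^{(+)}_{*,*}$. Finiteness of $S_+$ is what makes the passage from ``uniform over initial phase'' to ``uniform over $\by\in\calS^{(+)}$'' automatic, since $\bg^{(+)}_{\by}(k)$ is constant along the level component.
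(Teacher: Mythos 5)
Your proof is correct and follows essentially the same route as the paper, which treats the proposition as immediate from exactly the two observations you formalize: the increment expectation $\mathbb{E}(\bL^{(+)}_n-\bL^{(+)}_{n-1}\,|\,\bY^{(+)}_0=\by)$ depends on $\by$ only through the phase $j$, and $S_+$ is finite, so the Ces\`aro-type limit in equation (\ref{eq:a1i_limit})/(\ref{eq:limit_gi}) is automatically uniform over $\calS^{(+)}$. Your explicit treatment of the Ces\`aro ergodic theorem for the reducible finite chain (absorption from transient phases into the unique irreducible class, no aperiodicity needed) is a sound filling-in of what the paper leaves implicit.
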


\begin{figure}[htbp]
\begin{center}
\setlength{\unitlength}{0.65mm}
\begin{picture}(85,70)(0,0)
\thicklines
\put(0,10){\vector(1,0){80}}
\put(80,4){\makebox(0,0){\normalsize $l_1$}}
\thicklines
\put(40,66){\vector(0,1){5}}
\multiput(40,0)(0,4){17}{\line(0,1){2}}
\put(35,68){\makebox(0,0){\normalsize $l_2$}}
\thinlines
\put(0,30){\line(1,0){80}}
\put(31.5,28.5){\makebox(0,0){\normalsize $K_+$}}
\put(55,45){\makebox(0,0){\normalsize $\calV^{(1)}_+$}}
\put(55,21){\makebox(0,0){\normalsize $\calV^{(1)}_0$}}
\end{picture}
\caption{Partition of the state space $\calS^{(1)}$.}
\label{fig:tildeS_partition}
\end{center}
\end{figure}

For $i\in\{1,2\}$, let $\{\calV^{(i)}_0, \calV^{(i)}_+\}$ be a partition of the state space $\calS^{(i)}$, defined as
\[
\calV^{(i)}_0 = \{((l_1,l_2),j)\in\calS^{(i)}: l_{3-i}<K_+ \},\quad
\calV^{(i)}_+ = \calS^{(i)}\setminus \calV^{(i)}_0  
\]
 (see Fig.\ \ref{fig:tildeS_partition}).
The following proposition gives approximations for $\bg^{(1)}_{\by}(k)$ and $\bg^{(2)}_{\by}(k)$. 
%
\begin{proposition} \label{pr:approximation_g12} 
Let $\varepsilon$ be an arbitrary positive number and set $u_+$ so that it satisfies, for any $\by\in\calS^{(+)}$, $|g^{(+)}_{m,\by}(u_+)-\bar{a}^{(+)}_m|<\varepsilon/4$ for $m\in\{1,2\}$, which is possible by Proposition \ref{pr:approximation_g}. Furthermore, set $K_+$ so that it satisfies $K_+>u_+$. 
For $i\in\{1,2\}$, $\bg^{(i)}_{\by}(k)$ is approximated as follows.
\begin{itemize}
\item[(i)] When $\bar{a}^{(+)}_{3-i}<0$, there exists a positive integer $u_i^*$ such that if $k\ge u_i^*$, then for every $\by\in\calV^{(i)}_0$, 
\begin{equation}
\big| g^{(i)}_{m,\by}(k) - \bar{a}^{(i)}_m \big| < \varepsilon\quad \mbox{for $m\in\{1,2\}$}. 
\end{equation}
\item[(ii)] When $\bar{a}^{(+)}_{3-i}\ge 0$, there exists a positive integer $u_i^*$ such that if $k\ge u_i^*$, then for every $\by\in\calV^{(i)}_0$, 
\begin{equation}
\big| g^{(i)}_{m,\by}(k) - \bar{a}^{(+)}_m \big| < \varepsilon\quad \mbox{for $m\in\{1,2\}$}. 
\end{equation}

%
\end{itemize}
\end{proposition}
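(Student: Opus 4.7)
The plan is to treat only $i=1$ (the case $i=2$ is symmetric) and to combine a block decomposition of the time window with Proposition \ref{pr:approximation_g}. The key structural observation is that, by skip-freeness of $\{L^{(1)}_{2,n}\}$ together with the hypothesis $K_+ > u_+$, if a block of length $u_+$ starts at a state with $L^{(1)}_{2,\cdot} \ge K_+$, then $L^{(1)}_{2,n} \ge 1$ throughout the block, so on this block the restricted chain $\{\bY^{(1)}_n\}$ is identical in distribution to $\{\bY^{(+)}_n\}$ started from the same state (after the standard identification of the two state spaces on $\{l_2 \ge 1\} \times S_+$). In particular, the one-step mean increment of $L^{(1)}_m$ at states with $l_2 \ge 1$ depends on the state only through $(l_2,j)$.

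For part (i), where $\bar{a}^{(+)}_2 < 0$, the induced QBD $\bar{\calL}^{(1)}$ is positive recurrent with unique stationary distribution $\bpi^{(1)}_*$, and the one-step mean increment of $L^{(1)}_m$ is a bounded function on the state space of $\bar{\calL}^{(1)}$ whose $\bpi^{(1)}_*$-expectation equals $\bar{a}^{(1)}_m$ by the definition in Subsection \ref{sec:mean_increment}. The standard Cesàro ergodic theorem then yields $g^{(1)}_{m,\by}(k) \to \bar{a}^{(1)}_m$ for each fixed starting condition $(l_2, j)$, and uniformity over the finite family $\{(l_2,j) : l_2 < K_+\}$ (the only data of $\by$ that matter) produces the required $u^*_1$.

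For part (ii), where $\bar{a}^{(+)}_2 \ge 0$, write $k = N u_+$ (the remainder contributes an $O(1/N)$ error) and apply the Markov property to obtain
\begin{align*}
g^{(1)}_{m,\by}(k) = \frac{1}{N} \sum_{r=0}^{N-1} \mathbb{E}\bigl[ g^{(1)}_{m,\bY^{(1)}_{r u_+}}(u_+) \bigm| \bY^{(1)}_0 = \by \bigr].
\end{align*}
By the structural observation above, on the event $\{L^{(1)}_{2,r u_+} \ge K_+\}$ the inner integrand equals $g^{(+)}_{m,\bY^{(1)}_{r u_+}}(u_+)$, which by Proposition \ref{pr:approximation_g} lies within $\varepsilon/4$ of $\bar{a}^{(+)}_m$; on the complementary event, skip-freeness gives $|g^{(1)}_{m,\cdot}(u_+)| \le 1$. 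Writing $q_r(\by) = \mathbb{P}(L^{(1)}_{2,r u_+} < K_+ \mid \by)$, the estimate reduces to bounding $\frac{1}{N}\sum_r q_r(\by)$, and since the distribution of $L^{(1)}_{2,n}$ given $\by$ depends only on $(l_2,j)$, uniformity over $\by \in \calV^{(1)}_0$ reduces once more to a finite family of starting conditions.

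It remains to show $\frac{1}{N}\sum_r q_r(\by) \to 0$. When $\bar{a}^{(+)}_2 > 0$, $\bar{\calL}^{(1)}$ is transient and $L^{(1)}_{2,n} \to \infty$ almost surely, so $q_r(\by) \to 0$ pointwise. When $\bar{a}^{(+)}_2 = 0$, $\bar{\calL}^{(1)}$ is null recurrent (or, under Assumption \ref{as:calL12_irreducible}, consists entirely of transient states), and the Cesàro ergodic theorem for non-positive-recurrent Markov chains gives $\frac{1}{N}\sum_r \mathbb{P}\bigl((L^{(1)}_{2,r u_+}, J^{(1)}_{r u_+}) \in F \bigm| \by\bigr) \to 0$ for every finite $F$. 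The main obstacle I anticipate is precisely this null-recurrent subcase of part (ii): the Cesàro decay of $q_r$ must be extracted uniformly over the finite family of initial conditions and then quantified to produce a concrete integer $u^*_1$, which requires careful bookkeeping of the block-truncation error against the $\varepsilon/4$ budget coming from Proposition \ref{pr:approximation_g}.
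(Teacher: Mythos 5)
Your proposal is correct; part (i) is essentially the paper's own argument (convergence via (\ref{eq:limit_gi}) plus uniformity over the finite set of relevant initial data $(l_2,j)$ with $l_2<K_+$), but for part (ii) you take a genuinely different and lighter route. The paper constructs a state-dependent embedded time scale $\sigma^{(1)}_n$ with blocks of length $u_+$ inside $\calV^{(1)}_+$ and length $1$ inside $\calV^{(1)}_0$, splits $\bg^{(1)}_{\by}(k)$ into three terms as in (\ref{eq:gA_phiA123}), and must then separately control the random weight $q_{\by}(k)=q^a_{\by}(k)-q^b_{\by}(k)$ carried by the interior blocks, proving $(k-u_+)/k<q^a_{\by}(k)\le 1$ and $q^b_{\by}(k)\to 0$ as in (\ref{eq:qAB_psi12}). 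You instead sample at the deterministic grid $ru_+$, so the Markov property gives the exact identity $g^{(1)}_{m,\by}(Nu_+)=\frac{1}{N}\sum_{r=0}^{N-1}\mathbb{E}\big[g^{(1)}_{m,\bY^{(1)}_{ru_+}}(u_+)\,\big|\,\bY^{(1)}_0=\by\big]$, and the only case split is whether a block starts above or below level $K_+$: above, the skip-free coupling with $\{\bY^{(+)}_n\}$ (the content of Remark \ref{re:embeddedMC}, using $K_+>u_+$) and Proposition \ref{pr:approximation_g} give the $\varepsilon/4$ bound; below, skip-freeness yields the crude bound $|g^{(1)}_{m,\cdot}(u_+)|\le 1$, so the discrepancy from $\bar{a}^{(+)}_m$ is at most $2$ per wasted block. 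Both proofs then rest on the same two pillars --- the interior coupling and the Ces\`aro decay of occupation probabilities of the finite set $\calV_0^*$, valid because every state of $\bar{\calL}^{(1)}$ is transient or null recurrent --- and uniformity is free in both, since only $(l_2,j)$ with $l_2<K_+$ matters and that family is finite, so $u^*_1$ is a maximum of finitely many thresholds. Your fixed-grid decomposition buys noticeably simpler bookkeeping (no random time change, no $q^a$/$q^b$ analysis), at the harmless cost of discarding boundary-started blocks. Two details worth nailing down: the sampled average $\frac{1}{N}\sum_r q_r(\by)$ is dominated by $u_+$ times the full-time Ces\`aro average $\frac{1}{Nu_+}\sum_{n=0}^{Nu_+-1}\mathbb{P}(\bY^{(1)}_n\in\calV^{(1)}_0\mid\bY^{(1)}_0=\by)$, so the ordinary Ces\`aro occupation estimate suffices and you need not worry about periodicity or reducibility of the $u_+$-step skeleton; and this single estimate in fact covers your subcase $\bar{a}^{(+)}_2>0$ as well (all states transient, so finite sets are left for good almost surely), which lets you merge your two subcases into the paper's unified ``transient or null recurrent'' treatment and removes the quantification worry you flag at the end --- no separate pointwise argument is needed.
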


Since the proof of this proposition is elementary, we give it in Appendix \ref{sec:app_proof_pro3_4}.

%
%
\subsection{Proof of Corollary \ref{co:stability_cond2}} \label{sec:proof_subsec}

Using a linear function on $\mathbb{R}^2$, we construct a test function and apply Propositions \ref{pr:Foster2} and \ref{pr:Markov_unstable2} to the discrete-time 2d-QBD process $\{\bY_n\}$.  
Denote by $\langle\bx_1,\bx_2\rangle$ the inner product of vectors $\bx_1,\bx_2\in\mathbb{R}^2$. The linear function  of $\bx\in\mathbb{R}^2$ is given by $\langle\bx,\bw\rangle$, where $\bw$ is a given vector. 
For $\by\in\calS$, define the one-step mean increment vector of the embedded Markov chain $\{\hat{\bY}_n\}=\{(\hat{\bL}_n,\hat{J}_n)\}$, $\hat{\balpha}_{\by}=(\hat{\alpha}_{1,\by},\hat{\alpha}_{2,\by})$, as
\[
\hat{\balpha}_{\by} 
= \mathbb{E}(\hat{\bL}_1-\hat{\bL}_0\,|\,\hat{\bY}_0=\by)
= \mathbb{E}(\bL_{u(\by)}-\bL_0\,|\,\bY_0=\by). 
\]
The following proposition corresponds to Condition B and Theorem 2.1 of \cite{Malyshev81} (also see Condition B and Theorem 4.3.4 of \cite{Fayolle95}). 
%
\begin{proposition} \label{pr:positive_recurrence} 
The discrete-time 2d-QBD process $\{\bY_n\}$ is positive recurrent if there exist parameter sets $\{K_+,K_1,K_2\}$ and $\{u_+,u_1,u_2\}$, a positive vector $\bw=(w_1,w_2)$ and a positive number $\delta$ such that, for every $\by\in\calS\setminus\calV_0$, $\langle\hat{\balpha}_{\by},\bw\rangle\le -\delta$. 
\end{proposition}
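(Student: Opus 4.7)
The plan is to apply Proposition \ref{pr:Foster2} with the linear test function $f: \calS \to \mathbb{R}$ defined by $f(((l_1,l_2),j)) = \langle (l_1,l_2),\bw\rangle = w_1 l_1 + w_2 l_2$. Since $\bw$ is positive and $l_1,l_2 \ge 0$, $f \ge 0$ on $\calS$, so $f$ will be lower bounded as required. For the exceptional set in Proposition \ref{pr:Foster2} I will take $\calS_0 = \calV_0$; this is a subset of $\{0,1,\ldots,K_1-1\}\times\{0,1,\ldots,K_2-1\}\times\bigcup_{i\in\{0,1,2,+\}} S_i$, which is finite since $K_1,K_2 < \infty$ and every phase set is finite.

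The key computation is the one-step drift of $f$ under the embedded chain $\{\hat{\bY}_n\}$. By linearity of expectation, for any $\by = ((l_1,l_2),j) \in \calS$,
\begin{equation*}
\mathbb{E}\bigl(f(\hat{\bY}_1) - f(\hat{\bY}_0)\,\bigm|\,\hat{\bY}_0 = \by\bigr)
= \langle \mathbb{E}(\hat{\bL}_1 - \hat{\bL}_0 \mid \hat{\bY}_0 = \by),\,\bw\rangle
= \langle \hat{\balpha}_{\by},\,\bw\rangle,
\end{equation*}
and by the hypothesis of the proposition this is bounded above by $-\delta$ for every $\by \in \calS\setminus\calV_0$, which establishes condition (i) of Proposition \ref{pr:Foster2}. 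For condition (ii), I will use that $u(\by) = 1$ for every $\by \in \calV_0$ by the definition of the function $u$ in Section \ref{sec:embeddedMC}, so $\sigma_1 = 1$ and hence $\hat{\bY}_1 = \bY_1$. Skip-freeness of $\{(L_{1,n},L_{2,n})\}$ then forces $|\hat{L}_{m,1} - l_m| \le 1$ for $m = 1,2$, whence $f(\hat{\bY}_1) \le f(\by) + w_1 + w_2 < \infty$, so $\mathbb{E}(f(\hat{\bY}_1) \mid \hat{\bY}_0 = \by) < \infty$. Positive recurrence of $\{\bY_n\}$ will then follow from Proposition \ref{pr:Foster2}.

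Conceptually the argument is a direct reduction and no serious obstacle is anticipated. The role of the embedded chain $\{\hat{\bY}_n\}$ is precisely to convert the multi-step mean increment vectors produced from the induced chains $\calL^{(i)}$ into one-step drifts usable in a Foster-type criterion, and the use of a linear test function makes the reduction of the drift to the inner product $\langle \hat{\balpha}_{\by},\bw\rangle$ immediate. The only point that will require explicit verification is that $u(\by) = 1$ on $\calV_0$, so that $\hat{\bY}_1$ stays in a bounded neighborhood of $\by$ and condition (ii) holds automatically; this is exactly how $u$ was constructed.
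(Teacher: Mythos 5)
Your proposal is correct and follows essentially the same route as the paper's own proof: the same linear test function $f(\by)=\langle\bl,\bw\rangle$ plugged into Proposition \ref{pr:Foster2} with $\calS_0=\calV_0$, the same linearity computation reducing the drift to $\langle\hat{\balpha}_{\by},\bw\rangle\le-\delta$, and the same use of $u(\by)=1$ on $\calV_0$ together with skip-freeness for condition (ii). The only cosmetic difference is that you bound $\mathbb{E}(f(\hat{\bY}_1)\,|\,\hat{\bY}_0=\by)$ by $f(\by)+w_1+w_2$ while the paper uses $K_1w_1+K_2w_2$; both are valid.
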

\begin{proof}
We prove this proposition by Proposition \ref{pr:Foster2}. 
Let $\bw=(w_1,w_2)$ be a positive vector satisfying the condition of the proposition and consider the following function as a test function:
\begin{equation}
f(\by) = f((\bl,j)) = \langle \bl,\bw \rangle,\quad \by=(\bl,j)=((l_1,l_2),j)\in\calS. 
\end{equation}
This function $f$ takes nonnegative values on $\calS$ and hence, it is lower bounded. 
Since $\langle\bx,\bw\rangle$ is linear in $\bx\in\mathbb{R}^2$, we have, for every $\by\in\calS\setminus\calV_0$, 
\begin{align}
\mathbb{E}(f(\hat{\bY}_1)-f(\hat{\bY}_0)\,|\,\hat{\bY}_0=\by) 
= \langle \mathbb{E}(\hat{\bL}_1-\hat{\bL}_0\,|\,\hat{\bY}_0=\by),\bw\rangle 
= \langle\hat{\balpha}_{\by},\bw\rangle
\le -\delta, 
\end{align}
where $\calV_0$ is finite. 
By the definition of the embedded Markov chain $\{\hat{\bY}_n\}$, if $\by=((l_1,l_2),j)\in\calV_0$, then $u(\by)=1$ and we have 
\begin{align}
\mathbb{E}(f(\hat{\bY}_1)\,|\,\hat{\bY}_0=\by) 
=\mathbb{E}(\langle\bL_1,\bw\rangle\,|\,\bY_0=\by\big)
\le K_1 w_1+K_2 w_2<\infty, 
\end{align}
where we use the fact that $\{\bL_n\}$ is skip free.
This completes the proof. 
\end{proof}

\begin{proof}[Proof of Corollary \ref{co:stability_cond2} (positive recurrence)]
First, we consider the case where $\bar{a}^{(+)}_1<0$, $\bar{a}^{(+)}_2<0$, $\bar{a}^{(1)}_1<0$ and $\bar{a}^{(2)}_2<0$. Set $\bw=(1,1)>\bzero$, then we have $\langle\bar{\ba}^{(+)},\bw\rangle=\bar{a}^{(+)}_1+\bar{a}^{(+)}_2<0$, $\langle\bar{\ba}^{(1)},\bw\rangle=\bar{a}^{(1)}_1<0$ and $\langle\bar{\ba}^{(2)},\bw\rangle=\bar{a}^{(2)}_2<0$. 
Set positive numbers $\delta$ and $\varepsilon$ so that they satisfy 
\begin{align}
-(\delta + \varepsilon) 
= \max\{ \langle\bar{\ba}^{(+)},\bw\rangle,\ \langle\bar{\ba}^{(1)},\bw\rangle,\ \langle\bar{\ba}^{(2)},\bw\rangle \}
<0. 
\end{align}
Set positive integer $u_+$ so that it satisfies, for every $\by\in\calS^{(+)}$ and for every $m\in\{1,2\}$, $|g^{(+)}_{m,\by}(u_+)-\bar{a}^{(+)}_m|<\varepsilon/8$. It is possible by Proposition \ref{pr:approximation_g}. Set positive integer $K_+$ so that it satisfies $K_+>u_+$. 
Furthermore, for $i\in\{1,2\}$, set positive integer $u_i$ so that it satisfies, for every $\by\in\calV^{(i)}_0$ and for every $m\in\{1,2\}$, $|g^{(i)}_{m,\by}(u_i)-\bar{a}^{(i)}_m|<\varepsilon/2$. It is possible by Proposition \ref{pr:approximation_g12}. For $i\in\{1,2\}$, set positive integer $K_i$ so that it satisfies $K_i>\max\{u_i, K_+\}$.
Note that we have, for every $\by\in\calV_+\subset\calS^{(+)}$, $\hat{\balpha}_{\by}/u_+=\bg^{(+)}_{\by}(u_+)$ and, for $i\in\{1,2\}$ and for every $\by\in\calV_i\subset\calV^{(i)}_0$, $\hat{\balpha}_{\by}/u_i=\bg^{(i)}_{\by}(u_i)$ (see Remark \ref{re:embeddedMC}). 
Hence, we obtain, for $\by\in\calV_+$, 
\begin{align}
\langle\hat{\balpha}_{\by}/u_+,\bw\rangle 
= \langle\bar{\ba}^{(+)},\bw\rangle + \langle\bg^{(+)}_{\by}(u_+)-\bar{\ba}^{(+)},\bw\rangle
\le -(\delta+\varepsilon)+\varepsilon/8+\varepsilon/8
\le -\delta
\le -\delta/u_+
\end{align}
and, for $i\in\{1,2\}$ and for every $\by\in\calV_i$,
\begin{align}
\langle\hat{\balpha}_{\by}/u_i,\bw\rangle 
= \langle\bar{\ba}^{(i)},\bw\rangle + \langle\bg^{(i)}_{\by}(u_i)-\bar{\ba}^{(i)},\bw\rangle
\le -(\delta+\varepsilon)+\varepsilon/2+\varepsilon/2
= -\delta
\le -\delta/u_i.
\end{align}
As a result, by Proposition \ref{pr:positive_recurrence}, the discrete-time 2d-QBD process $\{\bY_n\}$ is positive recurrent.

Next, we consider the case where $\bar{a}^{(+)}_1\ge 0$, $\bar{a}^{(+)}_2<0$ and $\bar{a}^{(1)}_1<0$. Set $\bw=(-\bar{a}^{(+)}_2,1+\bar{a}^{(+)}_1)>\bzero$, then we have $\langle\bar{\ba}^{(+)},\bw\rangle=\bar{a}^{(+)}_2<0$ and $\langle\bar{\ba}^{(1)},\bw\rangle=-\bar{a}^{(+)}_2 \bar{a}^{(1)}_1<0$. 
Set positive numbers $\delta$ and $\varepsilon$ so that they satisfy 
\begin{align}
-(\delta + \varepsilon) 
= \max\{ \langle\bar{\ba}^{(+)},\bw\rangle,\ \langle\bar{\ba}^{(1)},\bw\rangle \}
<0. 
\end{align}
Set positive integer $u_+$ so that it satisfies, for every $\by\in\calS^{(+)}$ and for every $m\in\{1,2\}$, $|g^{(+)}_{m,\by}(u_+)-\bar{a}^{(+)}_m|<\varepsilon/8$ and positive integer $K_+$ so that it satisfies $K_+>u_+$. 
Furthermore, set positive integers $u_1$ and $u_2$ so that they satisfy, for every $\by\in\calV^{(1)}_0$ and for every $m\in\{1,2\}$, $|g^{(1)}_{m,\by}(u_1)-\bar{a}^{(1)}_m|<\varepsilon/2$ and, for every $\by\in\calV^{(2)}_0$ and for every $m\in\{1,2\}$, $|g^{(2)}_{m,\by}(u_2)-\bar{a}^{(+)}_m|<\varepsilon/2$, respectively. For $i\in\{1,2\}$, set positive integer $K_i$ so that it satisfies $K_i>\max\{u_i, K_+\}$.
Then, we have, for every $i\in\{1,2,+\}$ and for every $\by\in\calV_i$, $\langle\hat{\balpha}_{\by}/u_i,\bw\rangle=\langle\bg^{(i)}_{\by}(u_i),\bw\rangle\le -\delta\le-\delta/u_i$.
As a result, by Proposition \ref{pr:positive_recurrence}, $\{\bY_n\}$ is positive recurrent.

The proof for the case where $\bar{a}^{(+)}_1< 0$, $\bar{a}^{(+)}_2\ge 0$ and $\bar{a}^{(2)}_2<0$ is analogous to the above case. This completes the proof of the corollary in the case where $\{\bY_n\}$ is positive recurrent. 
\end{proof}

%
For a vector $\bw\in\mathbb{R}^2$ and real number $c\in\mathbb{R}$, define a subset of $\calS$, $\calA_{\bw,c}$, as
\[
\calA_{\bw,c} = \{\by=(\bl,j)\in\calS: \langle\bl,\bw\rangle>c\}, 
\]
and an index set $\scrI_{\bw,c}$ as 
\[
\scrI_{\bw,c} = \{i\in\{0,1,2,+\}: \calA_{\bw,c}\cap\calV_i\ne\emptyset\}.
\]
The following proposition corresponds to Condition B$'$ and Theorem 2.1 of \cite{Malyshev81} (also see Condition B$'$ and Theorem 4.3.4 of \cite{Fayolle95}). 
%
\begin{proposition} \label{pr:transience} 
The discrete-time 2d-QBD process $\{\bY_n\}$ is transient if there exist parameter sets $\{K_+,K_1,K_2\}$ and $\{u_+,u_1,u_2\}$, a nonzero vector $\bw=(w_1,w_2)$, a real number $c$ and a positive number $\delta$ such that $\calA_{\bw,c}\ne\emptyset$ and, for every $i\in\scrI_{\bw,c}$ and for every $\by\in\calV_i$, $\langle\hat{\balpha}_{\by},\bw\rangle\ge \delta$. 
\end{proposition}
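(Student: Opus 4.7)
The plan is to apply Proposition \ref{pr:Markov_unstable2} with essentially the same linear test function $f(\by) = \langle \bl, \bw \rangle$ for $\by = (\bl, j) \in \calS$ that was used in the proof of Proposition \ref{pr:positive_recurrence}. The only difference is that here $\bw$ is merely assumed nonzero rather than positive, so $f$ need not be lower bounded; this is irrelevant because Proposition \ref{pr:Markov_unstable2}, unlike Proposition \ref{pr:Foster2}, imposes no boundedness-below requirement on the test function. Condition (i) of Proposition \ref{pr:Markov_unstable2} is immediate after identifying the set $\calA$ there with $\calA_{\bw,c}$, since $\{\by \in \calS : f(\by) > c\}$ coincides with $\calA_{\bw,c}$ by construction, and this set is nonempty by hypothesis.

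For condition (iii) I would invoke skip-freeness of the level process $\{\bL_n\}$: whenever $\mathbb{P}(\bY_1 = \by' \,|\, \bY_0 = \by) > 0$, both coordinates of $\bl' - \bl$ lie in $\mathbb{H}$, so
\[
|f(\by') - f(\by)| = |\langle \bl' - \bl, \bw \rangle| \le |w_1| + |w_2|,
\]
and therefore $b = |w_1| + |w_2|$ is a legitimate choice for the jump bound.

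For condition (ii), the drift condition, I would pick any $\by \in \calA_{\bw,c}$ and observe that by the definition of $\scrI_{\bw,c}$ as the set of indices $i$ for which $\calA_{\bw,c} \cap \calV_i \ne \emptyset$, such a $\by$ necessarily belongs to some $\calV_i$ with $i \in \scrI_{\bw,c}$. The hypothesis of the proposition then supplies the bound $\langle \hat{\balpha}_{\by}, \bw \rangle \ge \delta$, and linearity of the inner product converts this into
\[
\mathbb{E}(f(\hat{\bY}_1) - f(\hat{\bY}_0) \,|\, \hat{\bY}_0 = \by) = \langle \hat{\balpha}_{\by}, \bw \rangle \ge \delta,
\]
exactly as in the corresponding step of Proposition \ref{pr:positive_recurrence}. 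Proposition \ref{pr:Markov_unstable2} then yields transience of $\{\bY_n\}$.

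Since the structural work (formulating Proposition \ref{pr:Markov_unstable2} and designing the partition $\{\calV_0,\calV_1,\calV_2,\calV_+\}$) is already done, I expect no real obstacle. The single point worth flagging explicitly is the set containment $\calA_{\bw,c} \subset \bigcup_{i \in \scrI_{\bw,c}} \calV_i$, which ensures that the drift hypothesis provided by the proposition actually covers every state in $\calA_{\bw,c}$; this follows tautologically from the definition of $\scrI_{\bw,c}$, so the verification is purely mechanical.
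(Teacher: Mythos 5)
Your proposal is correct and follows essentially the same route as the paper's own proof: the same linear test function $f(\by)=\langle\bl,\bw\rangle$ fed into Proposition \ref{pr:Markov_unstable2}, with the drift condition on $\calA_{\bw,c}$ obtained from the tautological containment $\calA_{\bw,c}\subset\bigcup_{i\in\scrI_{\bw,c}}\calV_i$ and the jump bound $b=|w_1|+|w_2|$ from skip-freeness of $\{\bL_n\}$. Your explicit remark that Proposition \ref{pr:Markov_unstable2}, unlike Proposition \ref{pr:Foster2}, imposes no lower-boundedness on $f$ is a point the paper leaves implicit, but it changes nothing in substance.
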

\begin{proof}
We prove this proposition by Proposition \ref{pr:Markov_unstable2}. 
Let $\bw=(w_1,w_2)$ and $c$ be a real vector and real number satisfying the condition of the proposition. Consider the following test function:
\begin{equation}
f(\by) = f((\bl,j)) = \langle \bl,\bw \rangle,\quad \by=(\bl,j)\in\calS. 
\end{equation}
Then, we have, for every $\by\in\calA_{\bw,c}\subset\cup_{i\in\scrI_{\bw,c}} \calV_i$, 
\begin{align}
\mathbb{E}(f(\hat{\bY}_1)-f(\hat{\bY}_0)\,|\,\hat{\bY}_0=\by) 
&= \langle\hat{\balpha}_{\by},\bw\rangle
\ge \delta.  
\end{align}
Since the process $\{\bL_n\}$ is skip free, we have 
\begin{align}
|f(\bY_1)-f(\bY_0)| 
&=| \langle \bL_1-\bL_0,\bw \rangle |
\le |w_1|+|w_2|. 
\end{align} 
Hence, for every $\by,\by'\in\calS$, if $|f(\by')-f(\by)|>|w_1|+|w_2|$, then $\mathbb{P}(\bY_1=\by'\,|\,\bY_0=\by)=0$. 
This completes the proof. 
\end{proof}

%
\begin{proof}[Proof of Corollary \ref{co:stability_cond2} (transience)]
First, we consider the case where one of $\bar{a}^{(+)}_1$ and $\bar{a}^{(+)}_2$ is positive and the other is non-negative. Set $\bw=(1,1)$, then we have $\langle\bar{\ba}^{(+)},\bw\rangle=\bar{a}^{(+)}_1+\bar{a}^{(+)}_2>0$. 
Set positive numbers $\delta$ and $\varepsilon$ so that they satisfy 
\begin{align}
\delta + \varepsilon = \langle\bar{\ba}^{(+)},\bw\rangle > 0. 
\end{align}
Set positive integer $u_+$ so that it satisfies, for every $\by\in\calS^{(+)}$ and for every $m\in\{1,2\}$, $|g^{(+)}_{m,\by}(u_+)-\bar{a}^{(+)}_m|<\varepsilon/8$ and positive integer $K_+$ so that it satisfies $K_+>u_+$. 
For $i\in\{1,2\}$, set positive integer $u_i$ so that it satisfies, for every $\by\in\calV^{(i)}_0$ and for every $m\in\{1,2\}$, $|g^{(i)}_{m,\by}(u_i)-\bar{a}^{(+)}_m|<\varepsilon/2$, and  set positive integer $K_i$ so that it satisfies $K_i>\max\{u_i, K_+\}$.
Set $c=\max\{K_1,K_2\}+K_+$. Then, 
\begin{align}
&\calA_{\bw,c}
=\{\by=((l_1,l_2),j)\in\calS: \langle(l_1,l_2),\bw\rangle=l_1+l_2>c\}
\ne \emptyset
\end{align}
and we have $\scrI_{\bw,c}=\{1,2,+\}$.
We have, for every $\by\in\calV_+\subset\calS^{(+)}$, 
\begin{align}
\langle\hat{\balpha}_{\by}/u_+,\bw\rangle 
= \langle\bar{\ba}^{(+)},\bw\rangle + \langle\bg^{(+)}_{\by}(u_+)-\bar{\ba}^{(+)},\bw\rangle
\ge \delta+\varepsilon-(\varepsilon/8+\varepsilon/8)
\ge \delta
\ge \delta/u_+
\end{align}
and, for $i\in\{1,2\}$ and for every $\by\in\calV_i\subset\calV^{(i)}_0$,
\begin{align}
\langle\hat{\balpha}_{\by}/u_i,\bw\rangle 
= \langle\bar{\ba}^{(+)},\bw\rangle + \langle\bg^{(i)}_{\by}(u_i)-\bar{\ba}^{(+)},\bw\rangle
\ge \delta+\varepsilon-(\varepsilon/2+\varepsilon/2)
= \delta
\ge \delta/u_i.
\end{align}
As a result, by Proposition \ref{pr:transience}, the discrete-time 2d-QBD process $\{\bY_n\}$ is transient.

Next, we consider the case where $\bar{a}^{(+)}_2<0$ and $\bar{a}^{(1)}_1>0$; $\bar{a}^{(+)}_1$ may take any value.
Set $\bw=(-\bar{a}^{(+)}_2,-(1+|\bar{a}^{(+)}_1|))$, then we have $\langle\bar{\ba}^{(+)},\bw\rangle=-\bar{a}^{(+)}_2+|\bar{a}^{(+)}_1 \bar{a}^{(+)}_2|-\bar{a}^{(+)}_1 \bar{a}^{(+)}_2>0$ and $\langle\bar{\ba}^{(1)},\bw\rangle=-\bar{a}^{(+)}_2 \bar{a}^{(1)}_1>0$. 
Set positive numbers $\delta$ and $\varepsilon$ so that they satisfy 
\begin{align}
\delta + \varepsilon = \min\{\langle\bar{\ba}^{(+)},\bw\rangle,\langle\bar{\ba}^{(1)},\bw\rangle\} > 0. 
\end{align}
Set positive integer $u_+$ so that it satisfies, for every $\by\in\calS^{(+)}$ and for every $m\in\{1,2\}$, $|g^{(+)}_{m,\by}(u_+)-\bar{a}^{(+)}_m|<\varepsilon/8$ and positive integer $K_+$ so that it satisfies $K_+>u_+$. 
Furthermore, set positive integer $u_1$ so that it satisfies, for every $\by\in\calV^{(1)}_0$ and for every $m\in\{1,2\}$, $|g^{(1)}_{m,\by}(u_1)-\bar{a}^{(1)}_m|<\varepsilon/2$ and positive integer $K_1$ so that it satisfies $K_1>\max\{u_1, K_+\}$. 
Set $u_2$ at a sufficiently large positive integer, for example, at $u_1$, and positive integer $K_2$ so that it satisfies $K_2>\max\{u_2, K_+\}$.
Set $c=|\bar{a}^{(+)}_2| K_1$. Then, 
\begin{align}
&\calA_{\bw,c}
=\{\by=((l_1,l_2),j)\in\calS: \langle(l_1,l_2),\bw\rangle=|\bar{a}^{(+)}_2| l_1-(1+|\bar{a}^{(+)}_1|)l_2>c\}
\ne \emptyset
\end{align}
and $\scrI_{\bw,c}=\{1,+\}$.
We have, for $i\in\{1,+\}$ and for every $\by\in\calV_i\subset\calS^{(i)}$, $\langle\hat{\balpha}_{\by}/u_i,\bw\rangle = \langle\bg^{(i)}_{\by}(u_i),\bw\rangle \ge \delta/u_i$.
As a result, by Proposition \ref{pr:transience}, $\{\bY_n\}$ is transient.

The proof for the case where $\bar{a}^{(+)}_1<0$ and $\bar{a}^{(2)}_2>0$ is analogous to the above case. This completes the proof of the corollary in the case where $\{\bY_n\}$ is transient. 
\end{proof}

%
%
\section{Concluding remarks} \label{sec:concluding}

%
For stability analysis of multiple-queue models and queueing networks, a method that can handle multidimensional QBD processes is desired. The notion of induced Markov chain and that of mean increment vector can also be applied to discrete-time multidimensional QBD processes and a certain result has been obtained in \cite{Ozawa15}. 
However, in a discrete-time multidimensional QBD process, several induced Markov chains are also discrete-time multidimensional QBD processes and, in order to evaluate the mean increment vectors, we need the stationary distributions of the multidimensional QBD processes. For example, in a 3d-QBD process, one induced Markov chain is a finite Markov chain, three induced Markov chains are ordinary discrete-time QBD processes and the other three induced Markov chains are discrete-time 2d-QBD processes. 
In general, it is very difficult to obtain the stationary distribution of a multidimensional QBD process if the dimension of the level process is greater than or equal to $2$. 
At present, we have no good ideas to overcome that difficulty, and it is left as a further study.

%

%
%
%

%
%
\appendix

%
%
\section{Setup time model with MAPs and PH-distributions} \label{sec:setup_MAPPH}

For $i\in\{1,2\}$,  the arrival process of class-$i$ customers is given by the MAP with representation $(C_i,D_i)$, the service time distribution of them by the PH-distribution with representation $(U_i, \bbeta_i)$ and the distribution of setup times for class-$i$ customer's service by the PH-distribution with representation $(U^{set}_i, \bbeta^{set}_i)$. 
For $i\in\{1,2\}$, define $\bu_i$ and $\bu^{set}_i$ as $\bu_i=-U_i \bone$ and $\bu^{set}_i=-U^{set}_i\bone$. 
In the priority queue with setup times, the nonzero block matrices of the infinitesimal generator $Q$ are given as follows.
\begin{align*}
&A^{(+)}_{-1,0} = I\otimes I\otimes
\begin{pmatrix}
\bu_1 \bbeta_1 & O & O & O \cr
O & O & O & O \cr
O & O & O & O \cr
O & O & O & O 
\end{pmatrix},\  
A^{(+)}_{0,0} = C_1\oplus C_2\oplus \begin{pmatrix}
U_1 & O & O & O \cr
\bu^{set}_1 \bbeta_1 & U_1^{set} & O & O \cr
O & O & U_2 & O \cr
O & O & \bu^{set}_2 \bbeta_2 & U^{set}_2
\end{pmatrix}, \\
&A^{(+)}_{0,-1} = I\otimes I\otimes 
\begin{pmatrix}
O & O & O & O \cr
O & O & O & O \cr
O & \bu_2 \bbeta^{set}_1 & O & O \cr
O & O & O & O 
\end{pmatrix},\ 
A^{(+)}_{1,0}=D_1\otimes I\otimes I,\ 
A^{(+)}_{0,1}=I\otimes D_2\otimes I,\ 
%
\end{align*}
\begin{align*}
&A^{(1)}_{-1,0} = I\otimes I\otimes 
\begin{pmatrix}
\bu_1\bbeta_1 & O \cr
O & O  
\end{pmatrix},\ 
A^{(1)}_{0,0} = C_1\oplus C_2\oplus 
\begin{pmatrix}
U_1 & O \cr
\bu^{set}_1 \bbeta_1 & U^{set}_1  
\end{pmatrix},\ 
A^{(1)}_{1,0}=D_1\otimes I\otimes I, \\ 
&A^{(1)}_{0,1} = I\otimes D_2\otimes 
\begin{pmatrix}
I & O & O & O \cr
O & I & O & O  
\end{pmatrix},\ 
A^{(2)}_{1,0} =D_1\otimes I\otimes 
\begin{pmatrix}
O & O & I & O \cr
O & O & O & I  
\end{pmatrix},\\
%
%
&A^{(1)}_{0,-1} = I\otimes I\otimes 
\begin{pmatrix}
O & O \cr
O & O \cr
O & \bu_2 \bbeta^{set}_1 \cr
O & O  
\end{pmatrix},\ 
A^{(2)}_{-1,0} = I\otimes I\otimes 
\begin{pmatrix}
O & \bu_1 \bbeta^{set}_2 \cr
O & O \cr
O & O \cr
O & O  
\end{pmatrix},
\end{align*}
\begin{align*}
&A^{(2)}_{0,0} = C_1\oplus C_2\oplus 
\begin{pmatrix}
U_2 & O \cr
\bu^{set}_2 \bbeta_2 & U^{set}_2 
\end{pmatrix},\ 
A^{(2)}_{0,-1} = I\otimes I\otimes 
\begin{pmatrix}
\bu_2 \bbeta_2 & O \cr
O & O  
\end{pmatrix},\ 
A^{(2)}_{0,1}=I\otimes D_2\otimes I,\ 
%
\end{align*} 
\begin{align*}
&A^{(0)}_{-1,0} = I\otimes I\otimes 
\begin{pmatrix}
\bu_1 \cr \bzero  
\end{pmatrix},\ 
A^{(0)}_{0,0} = C_1\oplus C_2,\ 
A^{(0)}_{0,-1} = I\otimes I\otimes 
\begin{pmatrix}
\bu_2 \cr \bzero  
\end{pmatrix},\\ 
&A^{(0)}_{1,0} = D_1\otimes I\otimes 
\begin{pmatrix}
\bzero^\top & \bbeta^{set}_1  
\end{pmatrix},\ 
A^{(0)}_{0,1} = I\otimes D_2\otimes 
\begin{pmatrix}
\bzero^\top & \bbeta^{set}_2  
\end{pmatrix}.
%
%
\end{align*}

%
%
\section{Proof of Proposition \ref{pr:approximation_g12}} \label{sec:app_proof_pro3_4}

\begin{proof}[Proof of Proposition \ref{pr:approximation_g12}]
First, we consider the case where $\bar{a}^{(+)}_2<0$. In this case, the induced Markov chain $\bar{\calL}^{(1)}$ has just one irreducible class, and the unique stationary distribution $\bpi^{(1)}_*$ exists. 
%
%
%
%
%
Furthermore, $\bg^{(1)}_{\by}(k)$ satisfies, for any $\by=((l_1,l_2),j)\in\calV^{(1)}_0$, 
\begin{equation}
\bg^{(1)}_{\by}(k) 
= \frac{1}{k} \sum_{n=1}^k \mathbb{E}\big(\bL^{(1)}_n-\bL^{(1)}_{n-1}\,|\,(L^{(1)}_{2,0},J^{(1)}_0)=(l_2,j)\big), 
\end{equation}
and the set $\{(l_2',j'): \mbox{$((l_1',l_2'),j')\in\calV^{(1)}_0$ for some $l_1'\in\mathbb{Z}$}\}$ is finite. Hence, from equation (\ref{eq:limit_gi}), we obtain statement (i) of the proposition for $\bg^{(1)}_{\by}(k) $.  In the case where $\bar{a}^{(+)}_1<0$, an analogous result holds for $\bg^{(2)}_{\by}(k) $. 

Next, assuming $\bar{a}^{(+)}_2\ge 0$, we consider the case where $\bar{\calL}^{(1)}$ has no irreducible classes or it has just one irreducible class (see Assumption \ref{as:calL12_irreducible}). In this case, any state of $\bar{\calL}^{(1)}$ is transient or null recurrent and we have, for any $(l_2,j),(l_2',j')\in(\{0\}\times S_1)\cup(\mathbb{N}\times S_+)$, 
\begin{equation}
\lim_{n\to\infty} \mathbb{P}\big((L^{(1)}_{2,n},J^{(1)}_n)=(l_2',j')\,|\,(L^{(1)}_{2,0},J^{(1)}_0)=(l_2,j)\big) = 0. 
\end{equation}
Define a function $u^{(1)}$ on $\calS^{(1)}$ as 
\[
u^{(1)}(\by) = \left\{ \begin{array}{ll}
u_+ & \mbox{if $\by\in\calV^{(1)}_+$}, \cr
1 & \mbox{otherwise},
\end{array} \right.
\]
and a random sequence $\{\sigma^{(1)}_n\}$ as 
\[
\sigma^{(1)}_0=0,\quad \sigma^{(1)}_{n+1}=\sigma^{(1)}_n+u^{(1)}(\bY^{(1)}_{\sigma^{(1)}_n}),\,n\ge 0.
\]
For $\by=((l_1,l_2),j)\in\calV^{(1)}_0$, we obtain, by the definition of $\bg^{(1)}_{\by}(k)$, 
\begin{align}
\bg^{(1)}_{\by}(k) 
&= \frac{1}{k} \mathbb{E}\Big(\sum_{n=0}^{k} 1(\sigma^{(1)}_n\le k<\sigma^{(1)}_{n+1}) (\bL^{(1)}_{k}-\bL^{(1)}_0)\,\Big|\,\bY^{(1)}_0=\by\Big) \cr
&\ = \frac{1}{k} \mathbb{E}\Big(\sum_{n=0}^{k} 1(\sigma^{(1)}_n\le k<\sigma^{(1)}_{n+1}) \Big( \sum_{l=1}^n (\bL^{(1)}_{\sigma^{(1)}_l}-\bL^{(1)}_{\sigma^{(1)}_{l-1}})+(\bL^{(1)}_{k}-\bL^{(1)}_{\sigma^{(1)}_n}) \Big)\,\Big|\,\bY^{(1)}_0=\by\Big) \cr
&\ = \frac{1}{k} \sum_{l=1}^{k} \mathbb{E}\big(1(\sigma^{(1)}_l\le k) (\bL^{(1)}_{\sigma^{(1)}_l}-\bL^{(1)}_{\sigma^{(1)}_{l-1}})\,|\,\bY^{(1)}_0=\by\big) \cr
&\qquad\qquad + \frac{1}{k} \sum_{n=0}^{k} \mathbb{E}\big(1(\sigma^{(1)}_n\le k<\sigma^{(1)}_{n+1})(\bL^{(1)}_{k}-\bL^{(1)}_{\sigma^{(1)}_n})\,|\,\bY^{(1)}_0=\by\big), 
\end{align}
where $1(\cdot)$ is an indicator function and we use the fact that $\sigma^{(1)}_n>k$ for $n>k$. We have 
\begin{align}
&\mathbb{E}\big(1(\sigma^{(1)}_l\le k) (\bL^{(1)}_{\sigma^{(1)}_l}-\bL^{(1)}_{\sigma^{(1)}_{l-1}})\,|\,\bY^{(1)}_0=\by\big) \cr
%
%
&\quad= \sum_{\by'\in\calS^{(1)}} \mathbb{E}(\bL^{(1)}_{u^{(1)}(\by')}-\bL^{(1)}_0\,|\,\bY^{(1)}_0=\by')\,\mathbb{P}(\sigma^{(1)}_l\le k,\,\bY^{(1)}_{\sigma^{(1)}_{l-1}}=\by'\,|\,\bY^{(1)}_0=\by) \cr
%
%
&\quad= \sum_{\by'\in\calV^{(1)}_0} \mathbb{E}(\bL^{(1)}_1-\bL^{(1)}_0\,|\,\bY^{(1)}_0=\by')\,\mathbb{P}(\sigma^{(1)}_l\le k,\,\bY^{(1)}_{\sigma^{(1)}_{l-1}}=\by'\,|\,\bY^{(1)}_0=\by) \cr
&\qquad\qquad + \sum_{\by'\in\calV^{(1)}_+} u_+\,\bg^{(+)}_{\by'}(u_+)\,\mathbb{P}(\sigma^{(1)}_l\le k,\,\bY^{(1)}_{\sigma^{(1)}_{l-1}}=\by'\,|\,\bY^{(1)}_0=\by), 
\end{align}
where we use the fact that $\frac{1}{u_+}\mathbb{E}(\bL^{(1)}_{u_+}-\bL^{(1)}_0\,|\,\bY^{(1)}_0=\by')=\bg^{(+)}_{\by'}(u_+)$ for $\by'\in\calV^{(1)}_+\subset\calS^{(+)}$ (see Remark \ref{re:embeddedMC}). 
$\bg^{(1)}_{\by}(k)$ is, therefore, represented as 
\begin{equation}
\bg^{(1)}_{\by}(k) = \bphi^a_{\by}(k) + \bphi^b_{\by}(k) + \bphi^c_{\by}(k), 
\label{eq:gA_phiA123}
\end{equation}
where $\bphi^a_{\by}(k)=(\phi^a_{1,\by}(k),\phi^a_{2,\by}(k))$, $\bphi^b_{\by}(k)=(\phi^b_{1,\by}(k),\phi^b_{2,\by}(k))$ and $\bphi^c_{\by}(k)=(\phi^c_{1,\by}(k),\phi^c_{2,\by}(k))$ are given as 
\begin{align*}
&\bphi^a_{\by}(k) = \frac{1}{k} \sum_{l=1}^{k} \sum_{\by'\in\calV^{(1)}_0} \mathbb{E}(\bL^{(1)}_1-\bL^{(1)}_0\,|\,\bY^{(1)}_0=\by')\,\mathbb{P}(\sigma^{(1)}_l\le k,\,\bY^{(1)}_{\sigma^{(1)}_{l-1}}=\by'\,|\,\bY^{(1)}_0=\by), \cr
&\bphi^b_{\by}(k) = \frac{1}{k} \sum_{l=1}^{k} \sum_{\by'\in\calV^{(1)}_+} u_+\,\bg^{(+)}_{\by'}(u_+)\,\mathbb{P}(\sigma^{(1)}_l\le k,\,\bY^{(1)}_{\sigma^{(1)}_{l-1}}=\by'\,|\,\bY^{(1)}_0=\by), \cr
&\bphi^c_{\by}(k) = \frac{1}{k} \sum_{n=0}^{k} \mathbb{E}\big(1(\sigma^{(1)}_n\le k<\sigma^{(1)}_{n+1})(\bL^{(1)}_{k}-\bL^{(1)}_{\sigma^{(1)}_n})\,|\,\bY^{(1)}_0=\by\big). 
\end{align*}

Define $\calV_0^*$ as $\calV_0^*=\{(l_2',j'): \mbox{$((l_1',l_2'),j')\in\calV^{(1)}_0$ for some $l_1'\in\mathbb{Z}$}\}$. This $\calV_0^*$ is a finite subset of the state space of $\bar{\calL}^{(1)}=\{(L^{(1)}_{2,n},J^{(1)}_n)\}$. Since the process $\{\bL^{(1)}_n\}$ is skip free, we obtain, for $m\in\{1,2\}$, 
\begin{align}
|\phi^a_{m,\by}(k)| 
&\le \frac{1}{k} \sum_{l=1}^{k}\ \sum_{\by'\in\calV^{(1)}_0} \mathbb{P}(\sigma^{(1)}_{l-1}+u^{(1)}(\by')\le k,\,\bY^{(1)}_{\sigma^{(1)}_{l-1}}=\by'\,|\,\bY^{(1)}_0=\by) \cr
%
%
&\le \frac{1}{k} \sum_{l=0}^{k-1} \mathbb{P}(\bY^{(1)}_l\in\calV^{(1)}_0\,|\,\bY^{(1)}_0=\by), 
%
\label{eq:phiA1_g^{(1)}}
\end{align}
where $\mathbb{P}(\bY^{(1)}_l\in\calV^{(1)}_0\,|\,\bY^{(1)}_0=\by) = \mathbb{P}\big((L^{(1)}_{2,l},J^{(1)}_l)\in\calV_0^*\,|\,(L^{(1)}_{2,0},J^{(1)}_0)=(l_2,j)\big)$. 
Since $\calV_0^*$ is finite and any state of $\bar{\calL}^{(1)}$ is transient or null recurrent, there exists a positive integer $u_{1a}^*$ such that if $k\ge u_{1a}^*$, then for every $\by\in\calV^{(1)}_0$ and $m\in\{1,2\}$, $|\phi^a_{m,\by}(k)|<\varepsilon/4$. 
%
Since $\sigma^{(1)}_{n+1}-\sigma^{(1)}_n\le u_+$ for any $n\ge 0$ and $\{\bL^{(1)}_n\}$ is skip free, we have, for every $\by\in\calV^{(1)}_0$ and for $m\in\{1,2\}$, 
\begin{align}
|\phi^c_{m,\by}(k)| 
&\le \frac{1}{k} \sum_{n=0}^{k} \mathbb{E}\big(1(\sigma^{(1)}_n\le k<\sigma^{(1)}_{n+1}) | L^{(1)}_{m,k}-L^{(1)}_{m,\sigma^{(1)}_n}| \,\big|\,\bY^{(1)}_0=\by\big) \cr
%
%
&\le \frac{u_+}{k} \mathbb{E}\Big(\sum_{n=0}^{k} 1(\sigma^{(1)}_n\le k<\sigma^{(1)}_{n+1}) \,\Big|\,\bY^{(1)}_0=\by\Big) 
= \frac{u_+}{k}.
\label{eq:phiA2_g^{(1)}} 
\end{align}
Hence, there exists a positive integer $u_{1c}^*$ such that if $k\ge u_{1c}^*$, then for every $\by\in\calV^{(1)}_0$ and $m\in\{1,2\}$, $|\phi^c_{m,\by}(k)|<\varepsilon/4$.
%
For $\by\in\calS^{(1)}$ and for $k\ge 1$, define $q_{\by}(k)$ as 
\begin{align*}
&q_{\by}(k)= \frac{1}{k} \sum_{l=1}^{k} \sum_{\by'\in\calV^{(1)}_+} u_+ \mathbb{P}(\sigma^{(1)}_l\le k,\,\bY^{(1)}_{\sigma^{(1)}_{l-1}}=\by'\,|\,\bY^{(1)}_0=\by), 
\end{align*}
then we have 
\begin{align}
q_{\by}(k) 
&= \frac{1}{k} \sum_{l=1}^{k} \mathbb{E}\big(u^{(1)}(\bY^{(1)}_{\sigma^{(1)}_{l-1}})\,1(\sigma^{(1)}_l\le k)\,1(\bY^{(1)}_{\sigma^{(1)}_{l-1}}\in\calV^{(1)}_+)\,|\,\bY^{(1)}_0=\by\big) 
= q^a_{\by}(k) - q^b_{\by}(k), 
\label{eq:qAB_psi12}
\end{align}
where 
\begin{align*}
&q^a_{\by}(k) = \frac{1}{k} \mathbb{E}\Big( \sum_{l=1}^{k} u^{(1)}(\bY^{(1)}_{\sigma^{(1)}_{l-1}})\,1(\sigma^{(1)}_l\le k)\,|\,\bY^{(1)}_0=\by\Big), \\
&q^b_{\by}(k) = \frac{1}{k} \sum_{l=1}^{k} \mathbb{P}(\sigma^{(1)}_l\le k,\,\bY^{(1)}_{\sigma^{(1)}_{l-1}}\in\calV^{(1)}_0\,|\,\bY^{(1)}_0=\by) 
\end{align*}
and we use the fact that $u^{(1)}(\by')=u_+$ for $\by'\in\calV^{(1)}_+$.
Since $\sigma^{(1)}_l=\sum_{n=1}^{l} u^{(1)}(\bY^{(1)}_{\sigma^{(1)}_{n-1}})$, we have
\[
\sum_{l=1}^{k} u^{(1)}(\bY^{(1)}_{\sigma^{(1)}_{l-1}})\,1(\sigma^{(1)}_l\le k) = \sum_{l=1}^{k} \sigma^{(1)}_l\,1(\sigma^{(1)}_l\le k<\sigma^{(1)}_{l+1}), 
\] 
and this leads us to $(k-u_+)/k < q^a_{\by}(k) \le k/k = 1$. Hence, there exists a positive integer $u^*_{1b,a}$ such that if $k\ge u^*_{1b,a}$, then for every $\by\in\calS^{(1)}$, $1-\varepsilon/(8 \bar{a}^{(+)}_{max})< q^a_{\by}(k) \le 1$, where $\bar{a}^{(+)}_{max}=\max\{1,|\bar{a}^{(+)}_1|,|\bar{a}^{(+)}_2|\}$. 
We have 
\begin{align}
q^b_{\by}(k)
%
&\le \frac{1}{k} \sum_{l=0}^{k-1} \mathbb{P}(\bY^{(1)}_l\in\calV^{(1)}_0\,|\,\bY^{(1)}_0=\by), 
%
\end{align}
where $\mathbb{P}(\bY^{(1)}_l\in\calV^{(1)}_0\,|\,\bY^{(1)}_0=\by)=\mathbb{P}\big((L^{(1)}_{2,l},J^{(1)}_l)\in\calV_0^*\,|\,(L^{(1)}_{2,0},J^{(1)}_0)=(l_2,j)\big)$. 
Since $\calV_0^*$ is finite and every state of $\bar{\calL}^{(1)}=\{(L^{(1)}_{2,n},J^{(1)}_n)\}$ is transient or null recurrent, there exists a positive integer $u_{1b,b}^*$ such that if $k\ge u_{1b,b}^*$, then for every $\by\in\calV^{(1)}_0$, $0\le q^b_{\by}(k)<\varepsilon/(8 \bar{a}^{(+)}_{max})$. 
Hence, by expression (\ref{eq:qAB_psi12}), letting $u^*_{1b}=\max\{u^*_{1b,a},u^*_{1b,b}\}$, we see that if $k\ge u^*_{1b}$, then for every $\by\in\calV^{(1)}_0$, $1-\varepsilon/(4 \bar{a}^{(+)}_{max}) < q_{\by}(k) \le 1$.
Under the condition of the proposition, for every $\by'\in\calV^{(1)}_+$ and $m\in\{1,2\}$, $|g^{(+)}_{m,\by'}(u_+)-\bar{a}^{(+)}_m|<\varepsilon/4$, and we have, for every $\by\in\calV^{(1)}_0$ and $m\in\{1,2\}$, 
\begin{align*}
|\phi^b_{m,{\by}}(k)-\bar{a}^{(+)}_m q_{\by}(k)| < \varepsilon/4 \cdot q_{\by}(k) \le \varepsilon/4.
\end{align*}

As a result, letting $u^*_1=\max\{u_{1a}^*,u_{1b}^*,u_{1c}^*,u_++1\}$, we see from equation (\ref{eq:gA_phiA123}) that if $k\ge u_1^*$, then for every $\by\in\calV^{(1)}_0$ and for $m\in\{1,2\}$,  
\begin{align*}
|g^{(1)}_{m,\by}(k)-\bar{a}^{(+)}_m| 
\le |\phi^a_{m,\by}(k)| + |\phi^b_{m,\by}(k)-\bar{a}^{(+)}_m q_{\by}(k)| + |\bar{a}^{(+)}_m|\, |q_{\by}(k)-1| + |\phi^c_{m,\by}(k)| <\varepsilon,
\end{align*}
and this completes the proof.
\end{proof}

\end{document}